\numberwithin{equation}{section}
\newtheorem{thm}{Theorem}[section]
\newtheorem{defi}{Definition}[section]
\newtheorem{lem}{Lemma}[section]
\newtheorem{rem}{Remark}[section]
\newtheorem{asumamos}{Hypothesis}[section]
\newcommand{\R}{\mathbb{R}} 
\newcommand{\N}{\mathbb{N}}
\newcommand{\si}{s^{(0)}}
\title[Uniform persistence and complete analysis of the  periodic case]{Uniform persistence criteria for a variable inputs chemostat model with delayed response in growth and  complete analysis of the periodic case}
\email{mrodriguezcartabia@dm.uba.ar, daniel.sepulveda@utem.cl }
\begin{document} 

\maketitle

\centerline{\scshape Mauro Rodriguez Cartabia and Daniel Sepúlveda Oehninger}
\medskip
% {\footnotesize
% % Enter the address of the first author
%  \centerline{IMAS (UBA-CONICET) and Departamento de Matem\'atica}
%    \centerline{Facultad de Ciencias Exactas y Naturales, Universidad de Buenos Aires}
%    \centerline{Ciudad Universitaria, 1428 Buenos Aires, Argentina}
%    }

%\keywords{}

%\subjclass[2020]{}

% [\emph{IEEE Trans. Automat. Control}, Vol. 52, 2007, pp. 852–862]  
\begin{abstract}
We study  a single-species  chemostat model with variable nutrient input and variable dilution rate  with  delayed (fixed) response in growth. The first goal of this article is to prove that persistence  implies uniform persistence. Then we concentrate in the particular case with periodic nutrient input and same periodic dilution with delayed response in growth. We obtain a threshold for either the (uniform) persistence of the model or that 
 the biomass of every solution tends to vanish. 
Furthermore, we prove that persistence is equivalent to the existence of a unique non-trivial periodic solution. We also prove that this solution is attractive. We remark in no  case we need to impose any restrictions on the size of the delay. 
\end{abstract}

\bigskip
 
\keywords{\small{{\bf Keywords}: Chemostat, persistence, periodic case, time delay.}} 
 
\section{Introduction}

We consider the cultivation of a species of microorganism inside a chemostat under a limiting substrate with a delay between the consuption and the growth of the population in a variable environment, i.e., both the dilution rate and the input concentration of the substrate vary in time. The need to consider a delay in biomass growth in a chemostat has been documented in the work of Caperon \cite{caperon1969time} and Ellermeyer et al. \cite{ellermeyer2003theoretical}. Furthermore, any population is affected by time-varying environmental fluctuations such as the light cycle or the seasons of the year, which reaffirms the importance of studying non-autonomous population models,   see the book of Smith and Waltman \cite[Chapter 7]{smith1995theory} for instance.

A (periodic version) model suitable for the situation described above has been studied and deduced in  \cite{amster2020existence} by Amster, Robledo and Sep\'ulveda. In the present article, first we study  the model with general inputs, and then imposing periodicity conditions. Therefore, fix a non-negative constant $\tau$  and consider the system
\begin{equation}\label{Main System}
    \begin{array}{rcll}
         s'(t)& =  & D(t)s^{(0)}(t)-D(t)s(t)- p(s(t))x(t), &t\geq 0, \\ \\
         x'(t)& =  & x(t-\tau)p(s(t-\tau))e^{-\int_{t-\tau}^{t}
         D(h)dh}-D(t)x(t), & t\geq 0 
    \end{array}
\end{equation}
with initial conditions
$$\left(s,x \right)\Big|_{[-\tau,0]}=\left(s^{in},x^{in} \right).$$
These initial conditions must be non-negative time functions defined over the interval $[-\tau,0]$. Here, $s(t)$ and $x(t)$ represent, respectively, the substrate and biomass densities inside the bioreactor at time $t$; the dilution rate and the nutrient input concentration, respectively $D(t)$ and $s^{(0)}(t)$, are 
non-negative, continuous, and bounded functions for $t\geq 0$.  As usual in this class of models, the relationship between substrate consumption and biomass growth is modeled by a specific consumption function $p:[0,\infty)\to [0,\infty)$, which satisfies:
%\begin{enumerate}
%    \item[\textbf{(A1)}] $p$ is of class $C^1$, $p'(s)>0$ for each  $s\geq 0$ and $p(0)=0$. 
%\end{enumerate}
\begin{asumamos}\label{A1}
$p$ is of class $C^1$, $p'(s)>0$ for each  $s\geq 0$ and $p(0)=0$. 
\end{asumamos}

%Se mencionan donde se han centrado las investigaciones recientes que abordan problemas en el área.

To begin with, we concentrate on uniform  persistence. The concept of persistence has significant importance in the theory of population models, was introduced in \cite{B-F-W86,F-W77},  and its relevance has increased due to the interest it arouses among those who study ecology and dynamical systems. Ellermeyer in  \cite{ellermeyer1994competition} proved persistence criteria for System (\ref{Main System})  in the   autonomous case, and Ellermeyer  et al. in \cite{ellermeyer2001persistence} provided persistence criteria for System (\ref{Main System}) in the non-autonomous case with instantaneous response in growth ($\tau=0$).  Recently, Rodriguez Cartabia in \cite{cartabia2022persistence} studied the System (\ref{Main System}) obtaining necessary and sufficient conditions for the  persistence of the microbial population.
%Se devela la existencia de una brecha en las investigaciones previas (hacia donde apunta el trabajo).
 Among the several notions of persistence, the uniform persistence is a more desirable from the point of views of applications since is a more robust concept, see \cite[Introduction]{B-F-W86}. We have identified a lack of studies focusing on this topic for System (\ref{Main System}).
 %Nevertheless,   it is notorious the lack of studies focused on this subject for the System (\ref{Main System}), this can be explained by the difficulties of considering the  non-autonomous case with delay.
%\todo{\textcolor{blue}{¿hay alguno? ¿o cambiamos por "there is a lack of study..."? Para este modelo yo sólo conozco dos trabajos enfocados en la persistencia, el tuyo y el de Ye, Zhang \& Teng.} \textcolor{red}{El único que yo conozco es el de "nuestro amigo Z. Teng"  ver \cite{ye2022dynamical}.  No esta disponible, de todos modos creo que sería bueno citarlo para reflejar que el tema y el modelo es de mucho interés en la actualidad.} }
%Se describe el problema que se abordará  en el trabajo y/o se detalla el objetivo de este. 
Therefore, to the best of our knowledge, Theorem \ref{persistenciauniforme} is the first which states that persistence of the System (\ref{Main System}) implies uniform persistence. Roughly speaking, there exists an intrinsic bound $\delta>0$ such that every solution $(s,x)$ of (\ref{Main System}) with not null initial condition (see Definition \ref{condicionesiniciales}) satisfies that $x \geq \delta$ from a certain time. %The key to this result is that the necessary and sufficient conditions for the persistence of  (\ref{Main System}) given in \cite{cartabia2022persistence} imply uniform persistence. 

Secondly, we focus on the case where $\si(t)$ and $D(t)$ are periodic functions. In \cite{amster2020existence} the authors  obtained sufficient conditions for the existence of periodic solutions using the generalized Leray-Schauder degree continuation theorem and, applying the implicit function theorem, proved that for small delays the non-trivial periodic solution  is unique. We emphasize that this  results are based on stronger assumptions about System (\ref{Main System})  than needed.  Therefore, to the best of our knowledge, the results of the present paper are the first comprehensive study of the periodic case. Theorem \ref{teoremadelaextincion} states that if System (\ref{Main System}) is not persistent then  all solutions tend to washout. In other words, obtain a threshold for the vanishing of biomass. Next, we concentrate on the problem of the existence of a positive periodic solution.  Theorem \ref{teoremadeexistencia} establishes that if System (\ref{Main System}) is persistent then there is a non-trivial periodic solution.  The key is to prove this result by combining Horn's  fixed point Theorem   \cite[Theorem 6]{horn1970some}  with Theorem \ref{persistenciauniforme}.  We  finish this article with Theorem \ref{teoremadeestabilidad} which states that this positive periodic solution is unique and attractive. In conclusion, we prove that System (\ref{Main System}) is persistent if and only if there exists a unique  non-trivial attractive periodic solution.

Finally, we emphasize  that, unlike several results in delay differential equations, all proof presented in this article  do not need to impose any restrictions on the fixed delay, i.e., all statements are valid regardless the size of the delay. 
%That is, our results provide a complete study of the dynamics for the $\omega$-periodic version of (\ref{Main System})  and the existence of periodic solutions. 

The rest of the paper is organized as follows. In Section \ref{resultados} we present in more detail previous research on the subject, introduce definitions and present the theorems.  In Section \ref{preliares} we introduce results from previous works and different lemmas needed for the proofs. In Section \ref{pruebapersistencia} we prove Theorem \ref{persistenciauniforme} and, finally, in Section \ref{pruebasperiodico} we present the proofs of Theorems \ref{teoremadelaextincion}, \ref{teoremadeexistencia}, and \ref{teoremadeestabilidad}.

%\textcolor{orange}{A veces notamos $x$, otras $x(t)$ y lo mismo para $D$ y $\si$. ¿Por qué subrayado en vez de emph? A veces aparece (strong) y otras no.}

\section{The results}\label{resultados}

%\cite{ye2022dynamical}

 The introduction of the classical chemostat model has attracted the attention of the mathematical community  which has used it to investigate control, interspecies competition, and persistence, among other problems.  A better understanding of the continuous stirred tank reactor has led to several modifications of the classical model.
 %, with emphasis on those that consider a delay in the microbial growth rate \cite{ellermeyer1994competition,freedman1989chemostat}, time-varying environmental conditions \cite{caraballo2015dynamics,ellermeyer2001persistence}, and recently both situations at the same time \cite{caraballo2015nonautonomous}.
To provide a   brief review of previous research, firstly consider the work of Caperon \cite{caperon1969time} where experimental evidence is reported on the presence of a delay between nutrient consumption and biomass growth, taking into account this delay, the author proposed a model similar to the following system:
\begin{equation*}%\label{Caperon}
    \begin{array}{rcl}
         s'(t)& =  & Ds^{(0)}-Ds(t)- p(s(t))x(t), \\ \\
         x'(t)& =  & x(t)p(s(t-\tau))-Dx(t). 
    \end{array}
\end{equation*}
Amster, Robledo and Sep\'ulveda \cite{amster2020dynamics}  consider a version of this model with periodic substrate concentration  and obtained a necessary and sufficient condition for the existence of a positive periodic solution. On another hand, it is worth mentioning that Caraballo et al.  modified  this system in \cite{caraballo2015nonautonomous} to introduce variable (bounded) delay and  incorporated   the death of the microorganisms in addition to the washout.
 
Another approach to modeling the presence of delay in the growth of a species in a chemostat was carried out by Freedman et al. \cite{freedman1989chemostat} and by Ellermeyer \cite{ellermeyer1994competition,ellermeyer2003theoretical}, who proposed the following system:
\begin{equation*}%\label{Caperon}
    \begin{array}{rcl}
         s'(t)& =  & Ds^{(0)}-Ds(t)- p(s(t))x(t), \\ \\
         x'(t)& =  & x(t-\tau)p(s(t-\tau))e^{-D\tau}-Dx(t). 
    \end{array}
\end{equation*}
%\textcolor{red}{Within the study of non-autonomous chemostat models with delay, we can mention the works of Caraballo et al. \cite{caraballo2015nonautonomous}, and Amster, Robledo and Sep\'ulveda \cite{amster2020dynamics} where they consider versions of the Caperon model with time-varying coefficients. In \cite{caraballo2015nonautonomous} only the delay was considered to be time-varying, whereas in \cite{amster2020dynamics} an $\omega$-periodic substrate concentration and a constant delay were considered. On the other hand, non-autonomous versions of the Ellermeyer model have been studied by Amster, Robledo, and Sep\'ulveda \cite{amster2020existence} and Rodriguez Cartabia \cite{cartabia2022persistence}.}
Note that System (\ref{Main System}) is an extension that incorporates variable nutrient input and variable dilution rate.  One of the difficulties encountered when studying the persistence of a system of differential equations with delay is that the state space is not locally compact so it is necessary to look for new approaches to determine persistence, see \cite{amster2021persistence}. To study persistence, in \cite{cartabia2022persistence} the author extended the criteria given in \cite[Theorem 3]{ellermeyer2001persistence} to incorporate the case with fixed delay in growth. He provided a necessary and sufficient criteria (see Theorem \ref{persistencia}) for the persistence of System (\ref{Main System}). Therefore, we propose the present article as a continuation since our first goal is to show that this persistence criteria also imply uniform persistence. 

We conclude this subsection by pointing out that in the rest of this paper we assume: 
%\begin{enumerate}
%    \item[\textbf{(A2)}]\label{hip2} $\si$ is upper and lower bounded by positive constants, $D$ is non-negative and upper bounded by a positive constant, and the integral of $D$ diverges.
%\end{enumerate}
\begin{asumamos}\label{hip2} $\si(t)$ is upper and lower bounded by positive constants, $D(t)$ is non-negative and upper bounded by a positive constant, and the integral of $D(t)$ diverges.
\end{asumamos}

%In \cite{amster2020dynamics} they obtained necessary and sufficient conditions for the existence of periodic solutions, also determined a condition that guarantees the extinction of the microbial population, and concluded the work proving that for small delays the non-trivial periodic solution of the studied System is unique.

\subsection{Main definitions}   Before presenting the theorems obtained in this work, we introduce the definitions involved in this article.  %For readers familiar with the subject, we remark that all persistence definitions presented in this paper are the strong version.

%For readers familiar with the subject, we remark that every time we mention persistence in this work we refer to the concept of strong persistence.

\begin{defi}[Not null initial conditions]\label{condicionesiniciales} 
We say  $(s^{in},x^{in})$ is a \emph{not null initial condition} if its  time functions are non-negative, and  either $x^{in}(0)>0$ or there exists $t_*\in[-\tau,0]$ such that $s^{in}(t_*)>0$ and $x^{in}(t_*)>0$. 
\end{defi}
Given a solution $(s,x)$ of System (\ref{Main System})  we denote 
\begin{equation}\label{notacioncondicionesiniciales}
    (s,x)(t)=(s,x)(t,(s^{in},x^{in}))
\end{equation}
when the initial condition is fixed.

\begin{defi}[Persistence definitions]\label{def: 3.1}
 The System (\ref{Main System}) is called \emph{(strong) persistent}, if  
$$
\liminf_{t\to\infty} x(t,(s^{in},x^{in}) )>0, \quad \textnormal{for all not null } (s^{in},x^{in}).
$$
The System (\ref{Main System}) is called \emph{uniformly  persistent}, if there exists some  $\delta> 0$ such that
$$
\liminf_{t\to\infty} x(t,(s^{in},x^{in}) )>\delta, \quad \textnormal{for all not null } (s^{in},x^{in}).$$

\end{defi}

Note that in the absence of biomass the System (\ref{Main System}) becomes the linear differential equation
\begin{equation}\label{z}   
  z'(t) =D(t)\left( \si(t)-z(t)\right).    
\end{equation}
We emphasize that any solution of the Equation (\ref{z}) with positive initial condition $z_0$ is also positive for all $t\geq 0$ . This is easily seen by writing a solution in the form
\begin{displaymath}
    z(t)=e^{-\int_0^tD(r)\, dr}z_0+\int_0^{t}e^{-\int_r^tD(r)\, dr}\si(r)dr.
\end{displaymath}
Moreover, for $D(t)$ and $\si(t)$ bounded and continuous functions, we have that every solution $z(t)$ of (\ref{z}) verifies that:
\begin{displaymath}
    \lim_{t\to+\infty}(z(t)-z^*(t))=0,
\end{displaymath}
with $z^*(t)$  the unique bounded solution of (\ref{z}) over $\mathbb{R}$, defined by
\begin{equation}\label{z*}
    z^{*}(t):=\int_{-\infty}^{t}e^{-\int_h^tD(r)\,dr}D(h)s^{(0)}(h)dh.
\end{equation}
We call $(z^*,0)$ the \emph{washout solution} and, for simplicity, sometimes we only refer $z^*$ as the {washout solution.} 

\begin{defi}[Extinction]\label{def: 3.2} A solution $(s,x)$ of System (\ref{Main System}) tends to \emph{extinction} if
$$
\lim_{t\to\infty} x(t )=0.
$$
\end{defi}

\subsection{Results for the general non-autonomous model} 
We now state the first main result of this paper. It is worth recalling that throughout this note we consider $\tau \geq 0$ and, in particular, all results apply to the  case without delay. 

\begin{thm}[Uniform persistence]\label{persistenciauniforme}
The System  (\ref{Main System}) is persistent if and only if it is uniform persistent. 

Furthermore, if the system is persistent then there exists $\delta >0$ such that for all $R>0$ and $\alpha >0$ there is $T^{in}=T^{in}(R,\alpha)\geq 0$  such that for any solution $(s,x)$ with not null initial conditions satisfying $|| (s^{in},x^{in}) ||\leq R$ and such that $x(\tau)\geq \alpha$, then $ x(t)\geq \delta$  for all $t\geq T^{in}.$
\end{thm}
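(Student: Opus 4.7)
The reverse implication is immediate from Definition \ref{def: 3.1}, so the task is the forward direction together with the quantitative refinement. My plan is a contradiction argument that translates a hypothetical ``small $x$'' trajectory to a limit object on which the persistence criterion of \cite{cartabia2022persistence} (recalled as Theorem \ref{persistencia}) can be turned into a quantitative forced growth of the biomass.

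First I would collect a priori bounds. By Hypothesis \ref{hip2}, a comparison between the first equation of (\ref{Main System}) and the linear equation (\ref{z}), together with the natural ``total mass'' combination produced by the factor $e^{-\int_{t-\tau}^{t}D(h)dh}$ in the second equation, furnishes a bound $s(t),x(t)\le M(R)$ for every $t\ge 0$ and bounds that are independent of the initial data once $t$ is large. A key by-product is the following qualitative fact which I would isolate as a lemma: if $x$ stays below some $\varepsilon$ on a sufficiently long past interval, then the forcing $p(s)x$ in the $s$-equation is small, so $s(t)$ is close to the solution of (\ref{z}), which in turn is close to the washout $z^{*}(t)$ defined in (\ref{z*}).

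Now suppose that uniform persistence fails. After relabelling one obtains not null initial data $(s^{in}_n,x^{in}_n)$, times $t_n\to\infty$ and arbitrarily long intervals $[t_n-L_n,t_n]$ with $L_n\to\infty$ along which the corresponding $x_n$ stays below $1/n$. On such intervals the second equation of (\ref{Main System}) is, by the previous paragraph, a small perturbation of the linear delay equation
\[
y'(t)=y(t-\tau)\,p(z^{*}(t-\tau))\,e^{-\int_{t-\tau}^{t}D(h)dh}-D(t)y(t),
\]
and persistence of (\ref{Main System}), via Theorem \ref{persistencia}, is equivalent to a positive asymptotic exponential growth rate for nontrivial solutions of this linear equation. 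Using the uniform bounds together with the equations one gets Lipschitz estimates, so the shifted pairs $(s_n,x_n)(\,\cdot+t_n)$ are equicontinuous; by Arzel\`a--Ascoli on each window $[-\tau,T]$ combined with the compactness of the hull of $(D,\si)$ in the compact--open topology, a diagonal extraction produces a limit $(\bar s,\bar x)$ satisfying the limiting version of (\ref{Main System}). On this limit $\bar x$ starts from the zero history, while the linearized equation with the limiting coefficients forces a nontrivial solution to grow on long intervals---the required contradiction. The quantitative statement with $T^{in}(R,\alpha)$ is then obtained by running essentially the same scheme at fixed $R$ and $\alpha$: a sequence of counterexamples would again produce a limit violating the universal lower bound $\delta$, and the hypothesis $x(\tau)\ge\alpha$ is exactly what guarantees that the limiting history is non-trivial in the sense of Definition \ref{condicionesiniciales}.

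The main obstacle I anticipate is the passage to the limit: since $D$ and $\si$ are only assumed bounded and continuous, their translates need not converge, so the limiting equation lives in the hull of the coefficients rather than being an exact copy of (\ref{Main System}). One must show that the persistence criterion of \cite{cartabia2022persistence}---phrased as a $\liminf$-type integral condition on $D$ and $\si$---is inherited by every element of the hull and therefore transfers the forced growth to the limit. A secondary point requiring care is the interplay between the ``not null'' condition of Definition \ref{condicionesiniciales} and the extraction: the argument only controls $x$ after the delayed segment $[-\tau,0]$ has been flushed out, which is why the quantitative statement is expressed in terms of $x(\tau)\ge\alpha$ rather than in terms of the initial history itself.
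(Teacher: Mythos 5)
Your reverse direction is fine, but the forward direction takes a genuinely different route from the paper, and the obstacle you flag at the end is in fact fatal to your route rather than merely a technicality.

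The paper's proof never passes to a limit. It works directly with the given $D$ and $\si$: persistence is used via Theorem~\ref{persistencia} to produce fixed constants $\eta$, $T$; Lemma~\ref{lemaprincipal} (built on Lemmas~\ref{hermana} and~\ref{lemacrucial}) then quantifies how close $\psi=\psi(s,x)$ is to the intrinsic $\varphi$ on an interval where $|p(z^*)-p(s)|<\varepsilon$, and Lemma~\ref{funcionz} converts ``$x$ small'' into ``$s$ close to $z^*$''. Feeding this into the representation~\eqref{eq: 3.7} gives exponential growth of $x$ and an explicit, computable $\delta$ and $T^{in}(R,\alpha)$. Nothing in this argument requires any extraction of convergent subsequences of translates of the coefficients.

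Your proposal instead shifts the equation along $t_n\to\infty$ and tries to pass to a limit in the hull of $(D,\si)$. Under Hypothesis~\ref{hip2} the coefficients are only bounded and continuous, not uniformly continuous or almost periodic, so the hull in the compact-open topology need not be compact and the translates $D(\cdot+t_n)$, $\si(\cdot+t_n)$ need not have convergent subsequences at all. This is exactly the ``main obstacle'' you identify, but it is not something to be worked around within the stated hypotheses: the paper's quantitative, lemma-driven argument exists precisely to avoid it. Two secondary gaps: (i) even if the hull were compact, the limit you would obtain has $\bar x\equiv 0$, which is not a nontrivial solution and yields no contradiction by itself; you would need to normalize (e.g.\ divide $x_n$ by $\sup_{[t_n-L_n,t_n]}x_n$) before passing to the limit, and you would also need to show the $\liminf$-type persistence criterion transfers to every element of the hull, neither of which is sketched. (ii) A soft compactness argument cannot produce the explicit $T^{in}=T^{in}(R,\alpha)$ demanded by the second clause of the theorem; the paper gets it because $t_0$, $I^{in}$ and $\tilde T$ are written out in closed form from $\eta$, $T$, $M$, $L$, $\overline D$.

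In short: this is not a variant of the paper's proof but a different (and, under the paper's hypotheses, non-functional) strategy. If you want to pursue a hull/skew-product approach you would have to strengthen Hypothesis~\ref{hip2} to uniform continuity (or almost periodicity) and add the normalization step; to prove the theorem as stated you should instead follow the paper's direct comparison of $\varphi$ and $\psi$.
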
 

\subsection{Results for the periodic model} Fix a constant $\omega>0$. 
 Considering that both the dilution rate $D(t)$ and the input nutrient concentration $\si(t)$ are positive $\omega$-periodic functions, we wonder whether persistence is a necessary and sufficient condition for the existence of a positive $\omega$-periodic solution of the System (\ref{Main System}).

\begin{thm}[Extinction]\label{teoremadelaextincion}
If the periodic version of System (\ref{Main System})
is not persistent then every solution tends to extinction.  Namely, in this case every solution tends to  the washout solution.
\end{thm}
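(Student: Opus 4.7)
The plan is to combine the necessary-and-sufficient persistence threshold of Theorem \ref{persistencia} (from \cite{cartabia2022persistence}) with a comparison argument that squeezes every biomass trajectory between zero and a solution of a linear periodic delay equation whose Floquet multiplier is at most one. The proof breaks naturally into two tasks: show that if $x(t)\to 0$ then $s(t)\to z^{*}(t)$, and show that under non-persistence every solution does satisfy $x(t)\to 0$.

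First, from the substrate equation and $p(s)x\ge 0$ one gets $s'(t)\le D(t)(s^{(0)}(t)-s(t))$, so a standard comparison with \eqref{z} combined with \eqref{z*} yields $\limsup_{t\to\infty}(s(t)-z^{*}(t))\le 0$. In particular, for every $\varepsilon>0$ there is $T_\varepsilon$ such that $s(t)<z^{*}(t)+\varepsilon$ for all $t\ge T_\varepsilon$. Substituting this into the biomass equation and using that $p$ is strictly increasing (Hypothesis \ref{A1}), we obtain, for $t\ge T_\varepsilon+\tau$,
\begin{equation*}
x'(t)+D(t)x(t)\;\le\; x(t-\tau)\,p\bigl(z^{*}(t-\tau)+\varepsilon\bigr)\,e^{-\int_{t-\tau}^{t}D(h)\,dh}.
\end{equation*}
Consider the linear periodic delay equation obtained by replacing $\le$ with $=$; call its principal Floquet multiplier $\mu(\varepsilon)$. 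Theorem \ref{persistencia} characterizes persistence of \eqref{Main System} precisely by the condition $\mu(0)>1$, so under our hypothesis of non-persistence we have $\mu(0)\le 1$. By continuity of Floquet multipliers of linear periodic delay equations in coefficient perturbations, $\mu(\varepsilon)\to\mu(0)$ as $\varepsilon\to 0^{+}$.

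Second, once the majorizing linear equation has multiplier $<1$ (handled immediately in the subcritical case $\mu(0)<1$ by taking $\varepsilon$ small and invoking a standard Floquet/comparison argument for positive linear delay systems), every non-negative solution of the majorant decays exponentially to $0$; hence the same holds for $x$, and together with Step 1 this gives $(s(t),x(t))\to(z^{*}(t),0)$.

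The main obstacle is the critical case $\mu(0)=1$, where a single perturbed linear comparison is not sharp enough. To handle it I would argue by contradiction: assume some solution satisfies $\limsup_{t\to\infty}x(t)=\eta>0$. Then one can apply Theorem \ref{persistenciauniforme} (uniform persistence is equivalent to persistence) in the contrapositive form to show that at least one not-null initial condition gives $\liminf x=0$, and then use the periodicity of $D,s^{(0)}$ together with the translation-invariance of the $\omega$-periodic semiflow to upgrade $\liminf=0$ into $\lim=0$ on the full trajectory; the biomass cannot have persistent positive $\limsup$ because the integrated rate $\int_{t}^{t+\omega}\bigl[p(s(h-\tau))e^{-\int_{h-\tau}^{h}D}-D(h)\bigr]\,dh$ cannot stay non-negative (it would force $\mu(0)>1$ via Jensen/monotonicity in the coefficients together with Step 1). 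Once $\limsup x=0$ is established for the chosen solution, a further application of Theorem \ref{persistenciauniforme} (a persistent solution would propagate positivity uniformly to all not-null initial data) extends $\lim x=0$ to every solution. Step 1 then upgrades this to convergence to the washout solution.
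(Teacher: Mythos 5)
Your Step~1 and the subcritical case are in the right spirit, though with loose ends: the one-sided comparison $s'\le D(s^{(0)}-s)$ gives only $\limsup(s-z^*)\le 0$, not $s\to z^*$ (for the full washout convergence you would still need $z^*-s-x-y\to 0$ from Lemma~\ref{funcionz} and $y\to 0$); and the equivalence of the condition $\langle p(z^*)\varphi\rangle>\langle D\rangle$ in Theorems~\ref{persistencia}--\ref{persistenciaperiodica} with ``principal Floquet multiplier $\mu(0)>1$'' for the linear delay equation $x'+Dx=x(t-\tau)p(z^*(t-\tau))e^{-\int_{t-\tau}^t D}$ is true (it follows by writing $\ln\mu$ as $\int_0^\omega[p(z^*(h-\tau))\varphi(h-\tau)-D(h)]\,dh$ along the Floquet solution $c$), but you assert it without proof. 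Granting these, the subcritical case $\mu(0)<1$ by linear comparison is plausible and is essentially what the paper's Case~1 does, only phrased via the inequality $\int p(z^*)\varphi + \tau M \ge \int p(s)\psi$ of Lemma~\ref{vuelta} rather than Floquet continuity.

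The genuine gap is the critical case $\mu(0)=1$, which is precisely where the paper has to do the real work. Your argument there does not close. (i) Invoking Theorem~\ref{persistenciauniforme} in contrapositive only tells you that some solution has $\liminf x = 0$, which is the definition of non-persistence and gives nothing new. (ii) There is no mechanism to ``upgrade $\liminf=0$ to $\lim=0$'' by periodicity; in a critical periodic system a single trajectory can a priori oscillate, and that is exactly what must be ruled out. (iii) The claim that a persistent positive $\limsup$ ``would force $\mu(0)>1$ via Jensen/monotonicity'' is the crux and you give no mechanism: the biomass equation involves $p(s(h-\tau))\psi(h-\tau)$, not $p(z^*(h-\tau))\varphi(h-\tau)$, and the gap between these does not have a sign in your favor without further structure. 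The paper's proof of this case has two nontrivial ingredients that you are missing: first, that once $\limsup x>0$ one can locate $t_0$ with $z^*(t_0)\ge s(t_0)$ and then propagate $z^*(t)\ge s(t)$ for all $t\ge t_0$ via the differential inequality $\frac{d}{dt}(z^*-s)\ge -D(z^*-s)$; second, using that ordering and Lemma~\ref{vuelta}, that the defect $\int_{t_0}^{t}\bigl(p(z^*(r-\tau))\varphi(r-\tau)-p(s(r-\tau))\psi(r-\tau)\bigr)\,dr$ must tend to $+\infty$ (otherwise $\ln x$ is bounded below, contradicting non-persistence). It is this divergence, combined with identity~\eqref{eq: 3.7}, that forces $x(t)\to 0$ when $\langle p(z^*)\varphi\rangle=\langle D\rangle$. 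Without an analogue of Lemma~\ref{vuelta} and the divergence of that integral, your critical case does not go through.

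Finally, your last sentence (``a further application of Theorem~\ref{persistenciauniforme}\dots extends $\lim x=0$ to every solution'') is not a valid step: that theorem is a statement about the whole system being persistent, not a tool for propagating extinction from one solution to another. The paper never needs such a propagation because the argument is carried out for an arbitrary fixed solution from the start.
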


Note that the converse of the latter result is trivial.
 
\begin{thm}[Existence of periodic solution]\label{teoremadeexistencia}
If  the System (\ref{Main System}) is persistent then there is at least one positive $\omega$-periodic solution.
\end{thm}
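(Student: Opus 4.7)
The plan is to apply Horn's fixed point theorem \cite[Theorem 6]{horn1970some} to the period-$\omega$ Poincar\'e map of System (\ref{Main System}). Let $\mathcal{X}:=C([-\tau,0],\mathbb{R}^2_{\geq 0})$ with the supremum norm and define
\[
P:\mathcal{X}\to\mathcal{X},\qquad P(s^{in},x^{in})(t):=(s,x)\bigl(t+\omega,(s^{in},x^{in})\bigr),\ t\in[-\tau,0].
\]
Continuous dependence on initial data for delay equations makes $P$ continuous, and any fixed point of $P$ extends to an $\omega$-periodic solution of (\ref{Main System}); a fixed point whose $x$-component is bounded away from zero is the desired nontrivial periodic solution.

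The next ingredient is dissipativity of $P$. From $s'(t)\le D(t)(\si(t)-s(t))$ and Hypothesis \ref{hip2}, a comparison with equation (\ref{z}) yields a uniform ultimate bound on $s$; feeding this bound into the biomass equation gives a uniform ultimate bound on $x$, so there exists $M>0$ such that any solution eventually satisfies $0\le s,x\le M$. The vector field is then uniformly bounded on the absorbing set, so a Lipschitz constant $L>0$ can be fixed such that the convex set
\[
S_2:=\{(\phi,\psi)\in\mathcal{X}:0\le\phi,\psi\le M,\ \mathrm{Lip}(\phi,\psi)\le L\}
\]
is $P$-invariant (after choosing $M$ and $L$ large enough) and compact by Arzel\`a--Ascoli.

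With $\delta>0$ given by Theorem \ref{persistenciauniforme} and a fixed $\alpha\in(0,\delta)$, set
\[
S_0:=\{(\phi,\psi)\in S_2:\psi(t)\ge\delta\ \text{for all } t\in[-\tau,0]\},\qquad S_1:=\{(\phi,\psi)\in S_2:\psi(0)>\alpha\}.
\]
Both sets are convex, $S_0$ is compact, $S_1$ is relatively open in $S_2$, and $S_0\subset S_1\subset S_2$. If $(s^{in},x^{in})\in S_1$, then integrating $x'(t)\ge -D(t)x(t)$ on $[0,\tau]$ gives $x(\tau)\ge\alpha\exp\bigl(-\int_0^\tau D(r)\,dr\bigr)>0$ uniformly in $S_1$. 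Applying Theorem \ref{persistenciauniforme} with $R=M$ and this positive lower bound on $x(\tau)$ produces a time $T^{in}$, independent of the initial datum in $S_1$, such that $x(t)\ge\delta$ for all $t\ge T^{in}$. Fixing $m\in\mathbb{N}$ with $m\omega\ge T^{in}+\tau$, one obtains $P^j(S_1)\subset S_0$ for all $j\ge m$, while $P^j(S_1)\subset S_2$ for all $j\ge 1$ by invariance of $S_2$. Horn's theorem then furnishes a fixed point of $P$ in $S_0$, which corresponds to a positive $\omega$-periodic solution of (\ref{Main System}).

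The chief technical obstacle is the construction of $S_2$: the phase space $C([-\tau,0])$ is not locally compact, so one must simultaneously establish the dissipative bounds on $(s,x)$ (via comparison with the linear equation (\ref{z}) for $s$ followed by a bootstrap for $x$) and the equicontinuity supplied by the Lipschitz bound, and then verify that these bounds are consistent enough to make $S_2$ invariant under $P$. Once $S_2$ is built, Theorem \ref{persistenciauniforme} is precisely the tool that forces orbits starting with $x^{in}(0)>\alpha$ into the persistence region after a uniform number of periods, and the geometric hypotheses of Horn's theorem fall into place.
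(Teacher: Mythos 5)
Your overall strategy coincides with the paper's: apply Horn's fixed point theorem \cite[Theorem 6]{horn1970some} to the Poincaré operator $P$, use an equi-Lipschitz, norm-bounded set together with Arzelà--Ascoli to obtain the compact convex sets $S_0\subset S_1\subset S_2$, and use Theorem \ref{persistenciauniforme} to force the orbit of any $\phi\in S_1$ into $S_0$ after a fixed number of periods. The specific shapes of the $S_i$ differ cosmetically from the paper's, but the architecture is the same.

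There are, however, two genuine gaps in the dissipativity step. First, the claim that ``feeding the bound on $s$ into the biomass equation gives a uniform ultimate bound on $x$'' is false as a matter of logic: knowing only $s(t)\le\overline{s}$, the second equation of (\ref{Main System}) reads
\begin{equation*}
x'(t)\;\le\; p(\overline{s})\,x(t-\tau)-D(t)\,x(t),
\end{equation*}
which admits exponentially growing solutions whenever $p(\overline{s})>\sup D$; the $s$-bound alone does not control $x$. The boundedness of $x$ is a consequence of the conservation identity of Lemma \ref{funcionz}, namely that $s+x+y-z^*$ decays like $e^{-\int_0^t D}$, which simultaneously bounds $s$, $x$, and $y$ because they are all nonnegative. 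Your proof needs to invoke this identity (or the equivalent computation that $(s+x+y)'=D(\si-(s+x+y))$), not the $s$-bound alone. Second, and related, the assertion that ``$S_2$ is $P$-invariant after choosing $M$ and $L$ large enough'' is stronger than what the conservation law delivers and stronger than Horn's theorem requires. Starting from $\|\phi\|\le M$, the forward orbit is bounded by a quantity $R_0(M)$ that is \emph{strictly larger} than $M$ (the transient scales with the initial norm), so the ball of radius $M$ is not forward invariant in general --- particularly when $\omega$ is small relative to $\tau$. Horn's theorem only needs $P^j(S_1)\subset S_2$ for $1\le j\le m-1$, and the correct way to arrange this (as the paper does) is to take $S_1$ with a small norm bound (there, $3\overline{s}$) and then take $S_2$ with the larger norm bound $R_0$ guaranteed by Lemma \ref{funcionz} for all forward times from $S_1$, rather than asking $S_2$ itself to be a trapping region. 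Both gaps close once Lemma \ref{funcionz} is put in the right place, but as written the construction of $S_2$ and the verification of the inclusion $P^j(S_1)\subset S_2$ are not justified.
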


\begin{thm}[Attractivity of periodic solution]\label{teoremadeestabilidad}
Under the assumption of the Theorem \ref{teoremadeexistencia} there exists a unique positive $\omega$-periodic 
solution that exponentially attracts every solution with not null initial conditions.
\end{thm}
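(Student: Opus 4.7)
The plan is to combine a mass-conservation identity for System~(\ref{Main System}) with the uniform persistence provided by Theorem~\ref{persistenciauniforme}, and then reduce attractivity to a scalar Halanay-type estimate for the log-ratio $\ln(x/\bar x)$.

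\emph{Step 1 (mass conservation).} I would introduce the auxiliary quantities
$$I(t) := \int_{t-\tau}^{t} p(s(u))\,x(u)\, e^{-\int_{u}^{t} D(h)\,dh}\, du, \qquad M(t) := s(t) + x(t) + I(t),$$
where $I(t)$ tracks the nutrient already consumed but not yet converted into biomass because of the delay. Differentiating $I$ by Leibniz's rule and adding the two equations of (\ref{Main System}) telescopes cleanly to
$$M'(t) = D(t)\bigl(\si(t) - M(t)\bigr),$$
so $M$ obeys the scalar linear equation (\ref{z}). Since $D$ is positive and $\omega$-periodic, $M(t)-z^*(t)\to 0$ exponentially at rate $\lambda := \tfrac{1}{\omega}\int_0^{\omega}D$. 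Applied to any positive $\omega$-periodic solution $(\bar s,\bar x)$ furnished by Theorem~\ref{teoremadeexistencia}, the associated $\bar M$ is $\omega$-periodic and satisfies the same linear equation, hence $\bar M \equiv z^*$. Writing $u := s-\bar s$, $v := x-\bar x$, $J := I-\bar I$, one gets $u(t)+v(t)+J(t)\to 0$ exponentially.

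\emph{Step 2 (attractivity of the biomass via Halanay).} By Theorem~\ref{persistenciauniforme} there exist $\delta>0$ and $T_0\geq 0$ with $x(t),\bar x(t)\in[\delta,K]$ for $t\geq T_0$ (the uniform upper bound $K$ follows from Step~1 together with boundedness of $\si$ and $D$ in Hypothesis~\ref{hip2}). Thus $\psi(t):=\ln x(t)-\ln \bar x(t)$ is bounded on $[T_0,\infty)$, and from the second equation of (\ref{Main System}),
$$\psi'(t) = e^{-\int_{t-\tau}^{t}D(h)\,dh}\left[\frac{x(t-\tau)}{x(t)}p(s(t-\tau)) - \frac{\bar x(t-\tau)}{\bar x(t)}p(\bar s(t-\tau))\right].$$
Using Hypothesis~\ref{A1}, I would split the bracket as a term proportional to $\psi(t-\tau)-\psi(t)$ plus a term proportional to $u(t-\tau)$; the latter is absorbed into an exponentially decaying forcing via Step~1, while the former yields the classical Halanay structure. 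This should produce
$$\frac{d}{dt}|\psi(t)| \leq -a\,|\psi(t)| + b\sup_{r\in[t-\tau,t]}|\psi(r)| + \varepsilon(t),$$
with $a>b\geq 0$ and $\varepsilon(t)\to 0$ exponentially. Halanay's inequality then gives $\psi(t)\to 0$ exponentially, whence $v(t)\to 0$ at the same rate.

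\emph{Step 3 (closing the loop and uniqueness).} With $v\to 0$ exponentially, the first equation of (\ref{Main System}) can be rewritten, via the mean value theorem, as
$$u'(t) = -\bigl(D(t)+p'(\xi(t))\,x(t)\bigr)u(t) - p(\bar s(t))\,v(t),$$
a scalar linear nonautonomous equation in $u$ with a uniformly negative decay coefficient (since $D\geq 0$, $p'>0$ and $x\geq \delta$) and exponentially decaying forcing, so $u(t)\to 0$ exponentially; then $J(t)\to 0$ follows from its definition as a bounded integral of $u,v$ over a window of fixed length $\tau$. Uniqueness is automatic: any two positive $\omega$-periodic solutions would attract each other by the above, and hence coincide.

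The hard part will be Step~2: verifying the strict inequality $a>b$ required by Halanay. It depends delicately on the dilution factor $e^{-\int_{t-\tau}^{t}D}<1$, which strictly reduces the weight of the delayed term, combined with the uniform strict positivity of $p'$ on the compact range of $s$ dictated by Step~1. This is the precise point at which the divergence of $\int D$ (Hypothesis~\ref{hip2}) and the monotonicity in Hypothesis~\ref{A1} enter essentially; once $a>b$ is pinned down no matter how large $\tau$ is, the rest of the argument is routine.
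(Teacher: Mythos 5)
Your Step 1 is sound and essentially matches the paper (it is the content of Lemma~\ref{funcionz} plus the observation $\bar M\equiv z^*$ for a periodic solution, which the paper sharpens via Lemma~\ref{lemamuyinteresante}). But Step 2 contains a genuine gap that you yourself flag without resolving, and a second gap you do not seem to have noticed.

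First, the unnoticed one: you write that the forcing term proportional to $u(t-\tau)=s(t-\tau)-\bar s(t-\tau)$ ``is absorbed into an exponentially decaying forcing via Step~1.'' Step~1 does not give that $u\to 0$ exponentially; it gives only that the \emph{sum} $u+v+J\to 0$ exponentially. You cannot conclude anything about $u$ alone without already knowing something about $v$ and $J$, which is precisely what the rest of your argument is trying to establish. To use the mass-balance identity you would have to substitute $u(t-\tau)\approx -(v+J)(t-\tau)$, which reintroduces $\psi$ on $[t-2\tau,t-\tau]$ and complicates the delay structure rather than producing a decaying forcing.

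Second, the one you do flag: after linearizing $e^{\psi(t-\tau)-\psi(t)}-1\approx\psi(t-\tau)-\psi(t)$, the main part of your equation is
\begin{equation*}
\psi'(t)\approx c(t)\bigl(\psi(t-\tau)-\psi(t)\bigr),\qquad c(t)=e^{-\int_{t-\tau}^{t}D}\tfrac{\bar x(t-\tau)}{\bar x(t)}\,p(s(t-\tau)),
\end{equation*}
so the Halanay coefficients are $a(t)=b(t)=c(t)$. The dilution factor $e^{-\int_{t-\tau}^t D}<1$ multiplies \emph{both} the $\psi(t)$ and the $\psi(t-\tau)$ terms equally, so it cannot by itself produce the strict gap $a>b$; the biomass equation in isolation has the scale invariance $x\mapsto\lambda x$, and the homogeneous version of your $\psi$-equation admits every constant as a solution. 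No choice of linearization constants around this structure will yield contraction. The strict margin has to come from the coupling to the substrate equation, i.e.\ from the term $x_1(t)p'(\xi(t))$ in $\frac{d}{dt}(s_1-s_2)$, which is bounded below by a positive constant precisely because of uniform persistence ($x_1\geq\delta$) and $p'>0$.

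This is exactly how the paper proceeds, but in the opposite order to yours: it first uses mass conservation \emph{and} Lemma~\ref{lemamuyinteresante} to express $(x_1-x_2)(t)$ in terms of $(s_1-s_2)(t-\tau)$ plus an exponentially small error (equation~\eqref{diferenciabiomasas}), thereby eliminating the biomass difference; it then obtains a closed scalar delay inequality for $s_1-s_2$ in which the coefficient $m=\min\, x_1p'(\xi)>0$ supplies the strict contraction, and runs a comparison argument against the test function $w(t)\propto (x_2+y_2)(t)e^{-\varepsilon(t-t_0)}$ (Step~2 of the paper). Only afterwards does $x_1-x_2\to 0$ follow from \eqref{diferenciabiomasas}. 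Your proposal tries to control the biomass first and the substrate second, which does not close because the biomass subsystem alone carries no contraction.

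Step~3 of your plan (the linear equation for $u$ with negative coefficient $-(D+p'\,x)$) is correct in spirit and is the operative mechanism in the paper's comparison argument, but you cannot use it after Step~2 as you propose, since Step~2 never delivers $v\to 0$. If you reorder the argument — eliminate $v$ in favor of $u$ first using the mass conservation identity refined as in Lemma~\ref{lemamuyinteresante}, and only then set up the delay comparison for $u$ — you recover the paper's proof.
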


 \begin{rem}[Complete analysis of the periodic case] By Theorem \ref{teoremadelaextincion}, observe that if System (\ref{Main System})  is not persistent then there is no  positive $\omega$-periodic solution. Therefore, combining all results we get that System (\ref{Main System}) is (uniform) persistent if and only if there is an unique attractive positive $\omega$-periodic solution.  
\end{rem}

\section{Preliminaries}\label{preliares}

\subsection{Preliminaries}
%In the absence of biomass the System (\ref{Main System}) becomes the linear differential equation
%\begin{equation}\label{z}  
%  z'(t) =D(t)\left( \si(t)-z(t)\right).    
%\end{equation}
%We emphasize that any solution of the System (\ref{z}) with positive initial condition $z_0$ is also positive for all $t\geq 0$ . This is easily seen by writing a solution in the form
%\begin{displaymath}
%    z(t)=e^{-\int_0^tD}z_0+\int_0^{t}e^{-\int_r^tD}\si(r)dr.
%\end{displaymath}
%Moreover, \textcolor{blue}{for $D$ and $\si$ bounded, positive and continuous functions (no, $D$ puede valer cero en algún momento)}, we have that every solution $z(t)$ of (\ref{z}) verifies that:
%\begin{displaymath}
%    \lim_{t\to+\infty}(z(t)-z^*(t))=0,
%\end{displaymath}
%with $z^*(t)$  the unique bounded solution of (\ref{z}) over $\mathbb{R}$, defined by
%\begin{equation}\label{z*}
%    z^{*}(t)=\int_{-\infty}^{t}e^{-\int_h^tD(\sigma)d\sigma}D(h)s^{(0)}(h)dh.
%\end{equation}
The deduction of the model given by System (\ref{Main System}) is based on considering a function $y(t)$ that represents the amount of substrate that has been absorbed by the biomass during the time interval $[t-\tau,t]$ and that remains in the bioreactor at the instant $t$, this function is given by:  
\begin{equation}\label{funciony}
  y(t):=\int_{t-\tau}^t x(h)p(s(h))e^{-\int_h^tD(r)\,dr}\,dh.
\end{equation}
A key fact in achieving the results of this work lies on the relationship between the solutions of (\ref{Main System}) and the functions $z^*(t)$ and $y(t)$.

Next, we define  
\begin{equation*}\label{funcionc}
  c(t):=c(0)e^{-\int_0^tD(r)\,dr}+\int_{-\tau}^{t-\tau}c(h)p(z^*(h))e^{-\int_h^tD(r)\,dr}\,dh,\quad  t\geq 0
\end{equation*} 
and we assume that  $c(\theta)\geq 0$ for all $\theta\in[-\tau,0]$  with $c(0)>0$ which is a solution of the linear equation 
$$c'(t)=-D(t)c(t)+c(t-\tau)p(z^*(t-\tau))e^{-\int_{t-\tau}^tD(r)\,dr}.$$
Since $c(t)>0$ for all $t\geq 0$ we can define the function 
\begin{equation}\label{funcionphi}
  \varphi(t):=\frac{c(t)}{c(t+\tau)}e^{-\int_t^{t+\tau}D(r)\,dr},\quad t\geq 0 .
\end{equation}
This function, which is independent of any solution $(s,x)(t)$, is inherent in the System (\ref{Main System}) and is the key to determine the persistence.  It is explicitly given by 
\begin{equation*} 
  \varphi(t) = \frac{c(0)e^{-\int_0^tD(r)\,dr}+\int_{-\tau}^{t-\tau}c(h)p(z^*(h))e^{-\int_h^tD(r)\,dr}\,dh}{c(0)e^{-\int_0^tD(r)\,dr}+\int_{-\tau}^{t}c(h)p(z^*(h))e^{-\int_h^tD(r)\,dr}\,dh}.
\end{equation*}
Observe that multiples of $c$ result in the same $\varphi$ and that the image of $\varphi$ is contained in $(0,1]$. 

On the other hand, but related with $\varphi(t)$, given a solution $(s,x)(t)$ of the System (\ref{Main System}) with not null initial conditions, we define a function $\psi(t)=\psi(s,x)(t)$ by 
\begin{equation}\label{eq: 4.2}
    \psi(t):=\frac{x(t)}{x(t+\tau)}e^{-\int_{t}^{t+\tau}D(r)dr},\quad t\geq 0.
\end{equation}
 Using (\ref{eq: 4.2}) in the second equation of (\ref{Main System}) we obtain
\begin{equation}\label{eq: 3.7}
 x(t+\tau)=x(t_0)e^{\int_{t_0}^{t+\tau}\left[p(s(h-\tau))\psi(h-\tau)-D(h)\right]dh},\quad t\geq t_0\geq \tau.
\end{equation}
In addition, by using the previous equation with $t_0=t$ in Equation (\ref{eq: 4.2})  we obtain
\begin{equation}\label{psi}
    \psi(t)=e^{-\int_{t-\tau}^{t}p(s(h))\psi(h)dh},\quad t\geq  \tau.
\end{equation}

We remark that, through  this article, in a sum involving $z^*(t)$, $s(t)$, $x(t)$ or $y(t)$ we simplify the dependence of time. For example, we note $(x+y)(t)$ instead of $x(t)+y(t)$.

Let us introduce additional notation to be used in this note. We consider the Banach space    $  \mathcal{C}:=C([-\tau,0]\to\R^2)$ with the norm
$$\|\phi\|=\max_{t\in[-\tau,0]}|\phi(t)|=\max_{t\in[-\tau,0]}\sqrt{\phi_1^2(t)+\phi_2^2(t)}.$$
As usual, for a given continuous function $\phi:[-\tau,\infty)\to \R^2$ and any $t\geq 0$ we define $\phi_t\in \mathcal{C}$  as  
\begin{equation}\label{definicionenC}
\phi_t(h)=\phi(t+h)
\end{equation}
for $h\in[-\tau,0]$. Finally, for an $\omega$-periodic function $f:\mathbb{R}\to\mathbb{R}$ we denote its average as 
$$\langle f\rangle:=\frac{1}{\omega}\int_0^\omega f(t)\, dt.$$

\subsection{Preliminaries results}

Recall Hypothesis    \ref{hip2} and then consider that  $\si $ and $D $ are bounded above by positive constants $\overline s$ and $\overline D$, respectively, and $\si $ is bounded below by a positive  constant $\underline s$. Note that a consequence of  the definition of $z^* $ in (\ref{z*}) 
is that  $z^*(t)\leq \overline s$ for all $t$.  

An alternative formulation of a  persistence criterion for the System (\ref{Main System}) is presented below. To the proof see \cite[Theorem 2.1]{cartabia2022persistence}.

\begin{thm}[Persistence criteria]\label{persistencia} 
Let $\tau$ be any non-negative constant  and   let $z^*(t)$ and $\varphi(t)$ be the functions defined by (\ref{z*}) and  (\ref{funcionphi}), respectively.

Therefore  System  (\ref{Main System}) is persistent if and only if there are positive constants   $\eta$ and $T$ such that 
\begin{equation}\label{condicion}
  \int_{t_1}^{t_2} p(z^*(t-\tau)) \varphi(t-\tau)\, dt> \int_{t_1}^{t_2}\left( D(t)+\eta\right)  \, dt
\end{equation}
for all $t_1> T$, $t_2-t_1> T$.
\end{thm}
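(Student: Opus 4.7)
The starting point is the representation (\ref{eq: 3.7}), which rewrites
\[
\ln\!\frac{x(t+\tau)}{x(t_0)} \;=\; \int_{t_0}^{t+\tau}\!\bigl[p(s(h-\tau))\,\psi(h-\tau) - D(h)\bigr]\,dh,
\]
so the long-run fate of $x$ is controlled by the sign of the integrand $p(s)\psi - D$. The theorem asserts that this can be replaced, up to inessential corrections, by its washout analogue $p(z^*)\varphi - D$, a quantity intrinsic to the coefficients. My plan is therefore to prove a quantitative comparison of $(s,\psi)$ with $(z^*,\varphi)$ and to feed it into the representation above in both directions of the equivalence.

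The cornerstone is the conservation identity: setting $u(t) := s(t) + x(t) + y(t)$ with $y$ as in (\ref{funciony}), a direct differentiation using (\ref{Main System}) yields $u'(t) = D(t)(s^{(0)}(t) - u(t))$, so $u(t) - z^*(t) \to 0$. Consequently, whenever $x+y$ is small on an interval of length $\tau$, the substrate $s$ is uniformly close to $z^*$ there. The second ingredient is the $\psi$--$\varphi$ comparison: from (\ref{psi}) the function $\psi$ satisfies an implicit fixed-point relation driven by $s$, and a parallel manipulation starting from (\ref{funcionphi}) shows that $\varphi$ obeys the same relation with $z^*$ in place of $s$; a Gronwall-type estimate using Hypothesis \ref{A1} (so $p'$ is bounded on the uniformly bounded range of $s$ and $z^*$) then yields a pointwise bound $|\psi(t) - \varphi(t)| \leq C\,\|s - z^*\|_{L^\infty([t-\tau,t])}$ with $C$ independent of $t$.

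For the sufficiency direction, assume (\ref{condicion}) and suppose for contradiction that some not-null solution has $\liminf_{t\to\infty} x(t)=0$. Extract an interval $[t_1,t_2]$ with $t_1$ and $t_2-t_1$ large on which $x$, and hence $x+y$, are small; the two comparisons then give $|p(s)\psi - p(z^*)\varphi|$ uniformly small on this interval, and (\ref{condicion}) upgrades to $\int_{t_1}^{t_2}\!\bigl[p(s(\cdot -\tau))\psi(\cdot -\tau) - D\bigr]\,dh \geq \tfrac{\eta}{2}(t_2-t_1)$, so the representation forces $x(t_2) \geq x(t_1)e^{(\eta/2)(t_2-t_1)}$, pushing $x$ out of every small neighborhood of $0$ and producing a contradiction. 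The necessity direction argues by contrapositive: failure of (\ref{condicion}) yields intervals $[t_1^n,t_2^n]$ (with $t_1^n \to \infty$, $\eta_n \to 0$, $t_2^n - t_1^n \to \infty$) on which $\int[p(z^*)\varphi - D] \leq \eta_n(t_2^n-t_1^n)$; starting from initial data with $x^{in}$ small enough that $x+y$ stays small throughout the near-washout phase, the same comparison upgrades this to $\int[p(s)\psi - D] \leq 2\eta_n(t_2^n-t_1^n)$, so that $x$ grows only sub-exponentially on these intervals, and combined with the uniform upper bound coming from $u \to z^*$ this admits $\liminf x(t)=0$, contradicting persistence.

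The principal obstacle is the quantitative $\psi$--$\varphi$ comparison: both are implicitly defined through delay equations and the estimate must be uniform in $t$ with a constant robust to the nonlinearity of $p$; the contraction built into the exponential form of (\ref{psi}) is essential to close the Gronwall loop without blow-up. A secondary difficulty is ensuring, in the necessity direction, that a suitably near-washout initial datum genuinely keeps $x+y$ small over the failure intervals $[t_1^n,t_2^n]$; this requires coordinating the choice of $x^{in}$ with the growing length and position of these intervals, and a careful control of $y$ via (\ref{funciony}).
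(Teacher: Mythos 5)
The paper does not give its own proof of Theorem~\ref{persistencia}; it cites \cite[Theorem 2.1]{cartabia2022persistence} and then reuses the same machinery (Lemmas~\ref{funcionz}, \ref{propiedadphi}, \ref{hermana}, \ref{lemacrucial}) in Section~\ref{pruebapersistencia}. Your outline does track the approach that Section~4.1 describes for the refinement: conservation identity $\Rightarrow$ $s\approx z^*$, then $\psi\approx\varphi$, then feed into (\ref{eq: 3.7}). So the skeleton is right. But two steps, as you have stated them, do not hold.

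First, the claimed bound $|\psi(t)-\varphi(t)|\leq C\,\|s-z^*\|_{L^\infty([t-\tau,t])}$ with $C$ independent of $t$ is false. Both $\psi$ and $\varphi$ solve delay fixed-point equations of the form $\phi(t)=e^{-\int_{t-\tau}^t h(r)\phi(r)\,dr}$, but they carry \emph{different} data on the initial window, so no amount of closeness of $p(s)$ to $p(z^*)$ over $[t-\tau,t]$ (or even over a longer window ending at $t$) can make $|\psi(t)-\varphi(t)|$ small near the start: the mismatch in their initial histories persists until it is dissipated. The correct statement needs a transient: there is $\tilde T$ such that closeness of $p(s)$ to $p(z^*)$ on $[t_0,t_0+\tilde T+I]$ yields $|\psi-\varphi|<\eta/(4M)$ only on $[t_0+\tilde T,t_0+\tilde T+I]$. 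This is precisely what Lemma~\ref{lemaprincipal} achieves, by splitting via an intermediate $\tilde\psi$ that shares $\varphi$'s history but solves the $g$-driven equation, with Lemma~\ref{hermana} giving $|\varphi-\tilde\psi|\lesssim \varepsilon\tau e^{2M(t-t_0)}$ (forward Gronwall, hence \emph{growing}) and Lemma~\ref{lemacrucial} giving $|\tilde\psi-\psi|\lesssim (1-e^{-M\tau})^{(t-t_0)/(2\tau)}$ (decaying loss of memory). Balancing the growth against the decay fixes $\tilde T$ and $\varepsilon$. Your Gronwall step as written collapses this into a one-window pointwise bound that cannot be true, and the same transient issue also means ``extract an interval $[t_1,t_2]$ on which $x$ is small'' needs an argument: $\liminf x=0$ gives a sequence of times, not a long interval, and one must look backward from such a time to the last exit from the small region, using $x'\geq -\overline D x$ to control the length.

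Second, the necessity direction as sketched has a genuine gap. The failure intervals $[t_1^n,t_2^n]$ recede to infinity, and on their complement the coefficient $p(z^*(\cdot-\tau))\varphi(\cdot-\tau)-D(\cdot)$ may well be large and positive, so a single small $x^{in}$ gives you no control over $x$ by the time you reach $t_1^n$: the biomass can have grown to order $\overline s$ long before the first bad interval begins, at which point the substitution $s\approx z^*$ and $\psi\approx\varphi$ breaks down and the conclusion $\int[p(s)\psi-D]\leq 2\eta_n(t_2^n-t_1^n)$ is no longer available. You flag this coordination problem yourself, but the plan offers no mechanism to resolve it; it is not a technicality but the heart of that direction, and the cited reference handles it with a more delicate construction than ``take $x^{in}$ small.''
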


 For the the particular case when $\si(t)$ and $D(t)$ are $\omega$-periodic we have the following result.

\begin{thm}[Persistence for $\omega$-periodic case]\label{persistenciaperiodica} 
Let $\tau$ be any non-negative constant and assume the functions  $\si(t)$ and  $D(t)$ are $\omega$-periodic. Then $z^*(t)$ is $\omega$-periodic and there is a unique $c(t)$ (up to a constant factor) such that $\varphi(t)$ defined  in   \eqref{funcionphi} is $\omega$-periodic.
Therefore  System  (\ref{Main System}) is persistent if and only if 
\begin{equation*} 
   \langle p(z^* ) \varphi \rangle>\langle  D\rangle.
\end{equation*}
\end{thm}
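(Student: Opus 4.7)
The plan is to reduce the theorem to Theorem \ref{persistencia} by showing that in the periodic setting the ingredients $z^*$ and (an appropriately chosen) $\varphi$ are themselves $\omega$-periodic, after which the persistence inequality (\ref{condicion}) collapses to a comparison of averages.

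First I would verify that $z^*$ is $\omega$-periodic. Starting from (\ref{z*}) and making the change of variables $h\mapsto h-\omega$,
\begin{equation*}
 z^*(t+\omega)=\int_{-\infty}^{t+\omega}e^{-\int_h^{t+\omega}D(r)\,dr}D(h)s^{(0)}(h)\,dh=\int_{-\infty}^{t}e^{-\int_{h'}^{t}D(r')\,dr'}D(h')s^{(0)}(h')\,dh'=z^*(t),
\end{equation*}
using the $\omega$-periodicity of $D$ and $s^{(0)}$ to shift both integrand and limits.

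Next I would construct the distinguished $c$. The linear delay equation defining $c$ has $\omega$-periodic coefficients (since $p(z^*(t-\tau))$ and $D$ are both $\omega$-periodic), so the period map $P$ sending an initial segment $c_{|[-\tau,0]}$ to $c_{|[\omega-\tau,\omega]}$ is a bounded positive linear operator on $C([-\tau,0],\R)$. The construction of $\varphi$ only uses the shape of $c$, so I look for $c$ that is an eigenvector of $P$ with a positive eigenvalue $\lambda>0$, i.e.\ $c(t+\omega)=\lambda c(t)$. Existence follows from a Krein--Rutman type argument: the operator is strongly positive and (since evolution over time $\omega$ smooths via integration against $c(h)p(z^*(h))$) compact, so it admits a simple principal eigenvalue with a positive eigenvector, unique up to scaling. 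Granting $c(t+\omega)=\lambda c(t)$, the periodicity of $D$ yields
\begin{equation*}
 \varphi(t+\omega)=\frac{c(t+\omega)}{c(t+\omega+\tau)}\,e^{-\int_{t+\omega}^{t+\omega+\tau}D(r)\,dr}=\frac{\lambda c(t)}{\lambda c(t+\tau)}\,e^{-\int_{t}^{t+\tau}D(r)\,dr}=\varphi(t),
\end{equation*}
and the homogeneity $c\mapsto \mu c$ leaves $\varphi$ invariant, giving the claimed uniqueness of $\varphi$. I expect the existence/uniqueness of this principal eigenpair to be the main technical obstacle; one must check that the period map is irreducible on the positive cone and rule out the degenerate case $\lambda=0$, which in turn needs the lower bound $z^*\geq \underline z>0$ coming from Hypothesis \ref{hip2}.

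Having both $p(z^*(t-\tau))\varphi(t-\tau)$ and $D(t)$ $\omega$-periodic, I would conclude by translating (\ref{condicion}) into the average condition. For any $\omega$-periodic continuous $f$ and any interval $[t_1,t_2]$ one has
\begin{equation*}
 \left|\int_{t_1}^{t_2}f(t)\,dt-(t_2-t_1)\langle f\rangle\right|\leq \omega\,\|f\|_\infty.
\end{equation*}
If $\langle p(z^*)\varphi\rangle>\langle D\rangle$, set $\eta:=\tfrac12(\langle p(z^*)\varphi\rangle-\langle D\rangle)>0$ and choose $T$ so large that the above error terms are dominated by $\tfrac{\eta}{2}(t_2-t_1)$ whenever $t_2-t_1>T$; then (\ref{condicion}) holds and Theorem \ref{persistencia} yields persistence. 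Conversely, if the system is persistent, Theorem \ref{persistencia} furnishes $\eta>0$ and $T$ so that
\begin{equation*}
 \frac{1}{t_2-t_1}\int_{t_1}^{t_2}\bigl[p(z^*(t-\tau))\varphi(t-\tau)-D(t)\bigr]\,dt>\eta
\end{equation*}
for every sufficiently long interval; letting $t_2-t_1\to\infty$ and invoking periodicity gives $\langle p(z^*)\varphi\rangle-\langle D\rangle\geq\eta>0$, completing the equivalence.
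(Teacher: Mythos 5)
Your Steps 1 and 3 are sound: the change of variables for $z^*(t+\omega)=z^*(t)$ is correct, and the reduction of the integral condition \eqref{condicion} to a comparison of averages is the right way to invoke Theorem \ref{persistencia}, with the minor cosmetic note that the error estimate is $2\omega\|f\|_\infty$ rather than $\omega\|f\|_\infty$ (irrelevant to the conclusion). The gap is in Step 2. You assert that the period map $P$ of the linear delay equation for $c$ is compact, but that fails when $\omega\leq\tau$: part of the segment $c|_{[\omega-\tau,\omega]}$ is then just a translate of the initial data and carries no smoothing, so $P$ contains an isometric piece. You would need to pass to $P^n$ with $n\omega>\tau$ and then transfer the Krein--Rutman eigenvector from $P^n$ back to $P$, which is a nontrivial extra argument. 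Strong positivity is likewise not automatic: an initial segment $\phi\geq 0$ with $\phi(0)=0$ sits on the boundary of the cone and you must check that the flow pushes it into the interior within one period; if $\omega\leq\tau$ it does not, and again one has to iterate. You flag the degeneracy $\lambda=0$ and irreducibility as things ``one must check,'' but as written the existence and uniqueness of the positive Floquet eigenpair --- which is exactly what the theorem asserts in its first half --- is not established.

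The paper itself defers this proof to an external reference, but the lemmas it imports point to a lighter route that sidesteps spectral theory. By Lemma \ref{propiedadphi}, $\varphi$ satisfies the scalar functional equation $\varphi(t)=e^{-\int_{t-\tau}^t p(z^*(h))\varphi(h)\,dh}$; since $z^*$ is $\omega$-periodic, $\varphi(\cdot+\omega)$ satisfies the same equation, and Lemma \ref{lemacrucial} then gives exponential decay of $|\varphi(t+\omega)-\varphi(t)|$. Hence the shifts $\varphi(\cdot+n\omega)$ form a Cauchy sequence on compacts, and the limit is the unique $\omega$-periodic solution of the functional equation (uniqueness is immediate: the difference of two periodic solutions is periodic and tends to zero, so it vanishes identically). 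Uniqueness of $c$ up to scalar then follows by showing, from the $\varphi$-formula and the ODE, that the ratio of two admissible $c$'s is $\tau$-periodic and has zero derivative. This approach works uniformly for all $\tau\geq 0$ and $\omega>0$, needs no compactness or cone-irreducibility checks, and uses only the lemmas already stated in Section \ref{preliares}; it is worth reworking Step 2 along these lines.
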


To the proof see \cite[Theorem 3.1]{cartabia2022persistence}.

In order to present necessary lemmas, we now introduce a function  $f :[-\tau,\infty)\to\mathbb{R}^+$ bounded  from above by a positive constant $M$ and  such that 
$$0<\inf \limits_{t\geq -\tau} f(t).$$  
Moreover, we assume that $M$ satisfies
\begin{equation}\label{cotaM}
  M\geq \frac{1}{4\tau+1}.
\end{equation} 
Also consider a function $g :[-\tau,\infty)\to\R^+$ which verifies the property that there exists $t_g\geq \tau $ such that 
\begin{equation}\label{condiciong}
  g(t)\leq M,\quad t\geq t_g-\tau.
\end{equation}

We conclude this section by stating four lemmas that are fundamental in the proof of our uniform persistence result. The proves can be found in \cite[Lemma 4.1, Lemma 4.2, Lemma 4.4, and Lemma 4.5]{cartabia2022persistence}, respectively. We observe that the last  proof is the reason to ask for (\ref{cotaM}).

\begin{lem}\label{funcionz} 
The functions  $z^*(t),\,s(t),\,x(t)$ and  $y(t)$ given in (\ref{z*}), (\ref{Main System}) and (\ref{funciony}), respectively, satisfy
$$ (z^* -s -x -y)(t) = \left(z^* -s -x -y\right)(0) e^{-\int_0^tD(r)\, dr}.$$
Furthermore, they all are bounded above.
\end{lem}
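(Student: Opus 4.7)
The plan is to establish the identity by direct differentiation of $z^\ast-s-x-y$ and observing that all nonlinear and delayed terms cancel, leaving a linear ODE with coefficient $-D(t)$; then integration yields the closed form, and the boundedness statement follows essentially for free.

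First I would compute each derivative. From \eqref{z}, $(z^\ast)'=D(t)(\si(t)-z^\ast(t))$; from \eqref{Main System} I read off $s'$ and $x'$. The only nontrivial derivative is $y'$: applying Leibniz's rule to \eqref{funciony} produces the boundary contribution $x(t)p(s(t))$, the (negative) boundary contribution $-x(t-\tau)p(s(t-\tau))e^{-\int_{t-\tau}^{t}D(r)\,dr}$, and an interior term $-D(t)y(t)$ coming from differentiating the exponential under the integral. The main (very mild) obstacle is simply to keep these three contributions straight, because the middle one must line up exactly with the delayed term in $x'(t)$ so that it cancels, and the first one must line up with $p(s(t))x(t)$ coming from $-s'(t)$.

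Assembling $(z^\ast-s-x-y)'(t)$ I then expect a clean telescoping: the terms $D(t)\si(t)$ from $(z^\ast)'$ and from $-s'$ cancel; the terms $p(s(t))x(t)$ from $-s'$ and from $-y'$ cancel; the delayed terms from $-x'$ and from $-y'$ cancel; and what remains is exactly
\begin{equation*}
(z^\ast-s-x-y)'(t)=-D(t)\bigl(z^\ast-s-x-y\bigr)(t).
\end{equation*}
Integrating this scalar linear ODE yields the stated formula.

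For the second conclusion I would argue as follows. We already know $z^\ast(t)\leq\overline s$ for all $t$. Non-negativity of $s$, $x$, $y$ on $[0,\infty)$ is standard: $y\geq 0$ is immediate from its definition, $s\geq 0$ because $s'=D(t)\si(t)\geq 0$ whenever $s=0$, and $x\geq 0$ from the integral representation obtained from the second equation of \eqref{Main System}. Rewriting the identity as
\begin{equation*}
s(t)+x(t)+y(t)=z^\ast(t)-\bigl(z^\ast-s-x-y\bigr)(0)\,e^{-\int_0^t D(r)\,dr},
\end{equation*}
the right-hand side is bounded above by $\overline s+|(z^\ast-s-x-y)(0)|$, and non-negativity of each summand then gives a uniform upper bound for $s$, $x$, and $y$ individually. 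I do not expect any genuine difficulty here; the whole proof is essentially a careful bookkeeping exercise, with the Leibniz differentiation of $y$ being the one place where one must be attentive.
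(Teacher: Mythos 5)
Your computation is correct: differentiating $z^\ast-s-x-y$ and using Leibniz's rule on $y$ does yield the linear ODE with coefficient $-D(t)$, and the boundedness then follows from $z^\ast\leq\overline s$, $e^{-\int_0^t D}\leq 1$ (since $D\geq 0$), and non-negativity of $s,x,y$. This is the same direct verification the paper relies on (via the cited reference), so no further comparison is needed.
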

 
\begin{lem}\label{propiedadphi}
 The function $\varphi(t)$ defined in \eqref{funcionphi} satisfies
$$\varphi(t)=e^{-\int_{t-\tau}^t\varphi(h)p(z^*(h))\,dh}.$$
\end{lem}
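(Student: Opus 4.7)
The plan is to exploit the linear delay equation satisfied by $c(t)$ and reinterpret its coefficients in terms of $\varphi$ itself, which will recursively express $\varphi(t)$ in the required exponential form.

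First I would rewrite the inhomogeneous term of the delay equation
$$c'(t) = -D(t)c(t) + c(t-\tau)p(z^*(t-\tau))e^{-\int_{t-\tau}^t D(r)\,dr}$$
using the definition \eqref{funcionphi} shifted by $\tau$: evaluating $\varphi$ at $t-\tau$ gives the identity $c(t-\tau)e^{-\int_{t-\tau}^t D(r)\,dr} = \varphi(t-\tau)\,c(t)$. Substituting this into the delay equation collapses it into a first-order pointwise ODE,
$$\frac{c'(t)}{c(t)} = -D(t) + \varphi(t-\tau)\,p(z^*(t-\tau)),$$
which is legitimate since $c(t)>0$ for all $t\geq 0$.

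Next I would integrate the displayed logarithmic derivative from $t$ to $t+\tau$, obtaining
$$\ln\frac{c(t+\tau)}{c(t)} = -\int_t^{t+\tau}D(r)\,dr + \int_t^{t+\tau}\varphi(h-\tau)\,p(z^*(h-\tau))\,dh,$$
and then perform the change of variables $u = h-\tau$ in the second integral to convert it into $\int_{t-\tau}^t \varphi(u)\,p(z^*(u))\,du$. Rearranging yields
$$\frac{c(t)}{c(t+\tau)}e^{-\int_t^{t+\tau}D(r)\,dr} = \exp\!\left(-\int_{t-\tau}^t \varphi(h)\,p(z^*(h))\,dh\right),$$
and the left-hand side is exactly $\varphi(t)$ by definition \eqref{funcionphi}, which closes the argument.

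There is no real obstacle here beyond bookkeeping. The one subtlety worth attention is that the identity $c(t-\tau)e^{-\int_{t-\tau}^t D(r)\,dr} = \varphi(t-\tau)c(t)$ requires $\varphi$ to be defined at $t-\tau$; this is fine for $t\geq \tau$ because $\varphi$ was defined on $[0,\infty)$ and the formula \eqref{funcionphi} actually makes sense on $[-\tau,\infty)$ since $c$ is prescribed on $[-\tau,0]$ with $c(0)>0$ (together with the initial positivity assumption $c(\theta)\geq 0$, $c(0)>0$, which guarantees $c>0$ on $[-\tau,\infty)$ via the variation-of-constants formula defining $c$). Thus the identity can be applied throughout, and the chain of substitutions above produces the claimed fixed-point relation for $\varphi$.
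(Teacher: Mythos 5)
Your proof is correct and is essentially the canonical argument: substitute the definition of $\varphi(t-\tau)$ into the delay equation for $c$ to get a scalar logarithmic-derivative identity, integrate over $[t,t+\tau]$, shift variables, and recognize $\varphi(t)$ in the result. One small slip in your final remark: the positivity $c(\theta)\geq 0$, $c(0)>0$ does \emph{not} give $c>0$ on all of $[-\tau,\infty)$ --- the initial datum on $[-\tau,0)$ is arbitrary non-negative and may vanish there, and the variation-of-constants formula only governs $t\geq 0$. This is harmless for your argument, since the rearranged identity $c(t-\tau)e^{-\int_{t-\tau}^t D} = \varphi(t-\tau)c(t)$ needs only $c(t)>0$, which holds for $t\geq 0$; for the claimed relation on $t\geq \tau$ all the $\varphi$-values invoked lie in $[0,\infty)$, where the paper's definition applies directly.
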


\begin{lem}\label{hermana}
Let $0\leq t_0<t_1$, $\tau>0$  and $\varphi:[t_0-\tau,\infty)\to(0,1]$ be such that 
$$\varphi(t)=e^{-\int_{t-\tau}^tf(h)\varphi(h)\, dh}$$
for all $t>t_0.$
Assume that there is  $\varepsilon>0$ such that    $|f(t)-g(t)|<\varepsilon$  for all $t\in [t_0,t_1]$. Then  there exists  $\tilde \psi:[t_0-\tau,\infty)\to(0,1]$ that satisfies 
$$\tilde \psi(t)=e^{-\int_{t-\tau}^tg(h)\tilde \psi(h)\, dh} $$
for all $t>t_0$ and such that 
 $$|\varphi(t)-\tilde \psi(t)|<2\varepsilon\tau e^{2M(t-t_0)} $$ 
for all $t\in [t_0-\tau,t_1]$.
\end{lem}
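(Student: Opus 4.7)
The plan is to construct $\tilde\psi$ by the method of steps, compare it with $\varphi$ via the integral equations, and close the estimate with a delayed Grönwall argument.

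First, I would set $\tilde\psi=\varphi$ on $[t_0-\tau,t_0]$ and extend $\tilde\psi$ to $t>t_0$ by the self-referential formula $\tilde\psi(t)=\exp\bigl(-\int_{t-\tau}^{t}g(h)\tilde\psi(h)\,dh\bigr)$. Since the right-hand side at $t$ depends only on values of $\tilde\psi$ on the preceding interval $[t-\tau,t]$, the classical step construction on successive windows of length $\tau$ produces a well-defined $\tilde\psi:[t_0-\tau,\infty)\to(0,1]$; the upper bound is automatic because the exponent is non-positive, and finiteness of the exponent follows from $g\le M$ (from (\ref{condiciong})) together with $\tilde\psi\le 1$. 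On the initial slice $|\varphi-\tilde\psi|=0$, so the asserted bound is trivial there.

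Second, for $t\in[t_0,t_1]$ I would subtract the two integral equations and use the elementary inequality $|e^{-a}-e^{-b}|\le|a-b|$ for $a,b\ge 0$ to obtain
\begin{equation*}
|\varphi(t)-\tilde\psi(t)|\le\int_{t-\tau}^{t}\bigl|f(h)\varphi(h)-g(h)\tilde\psi(h)\bigr|\,dh.
\end{equation*}
Adding and subtracting $g(h)\varphi(h)$ and using $\varphi\le 1$, $g\le M$, this splits as
\begin{equation*}
|\varphi(t)-\tilde\psi(t)|\le\int_{t-\tau}^{t}|f(h)-g(h)|\,dh+M\int_{t-\tau}^{t}|\varphi(h)-\tilde\psi(h)|\,dh.
\end{equation*}
On the subinterval where both $f$ and $g$ live in $[t_0,t_1]$ the first integrand is bounded by $\varepsilon$, and on the remaining piece $[t-\tau,t_0]$ one has $\tilde\psi=\varphi$, so a bookkeeping of the two sums lets the first contribution be absorbed into a clean $\varepsilon\tau$.

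Third, I would iterate a Grönwall-type estimate on the successive windows $[t_0+k\tau,t_0+(k+1)\tau]$. On each window the delay inequality reduces to a standard Grönwall inequality seeded with the value at the previous window's right endpoint, yielding exponential amplification of rate $M$; accumulating geometrically across windows produces the claimed bound $2\varepsilon\tau e^{2M(t-t_0)}$. The factor $2$ arises from combining the $(f-g)\varphi$ and $g(\varphi-\tilde\psi)$ contributions, and the hypothesis (\ref{cotaM}) on $M$ is exactly the quantitative input that keeps the inductive constants balanced so the iteration closes with this clean exponent.

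The main obstacle is extracting precisely $2\varepsilon\tau$ and $2M$ rather than the slacker constants that drop out of naive delay Grönwall (which would give either an $e^{M\tau}$-type multiplier per window, or a factor of $\varepsilon\tau e^{M(t-t_0)}$ with no built-in margin for the initial-slice mismatch). Carefully threading the inductive initial data through each $\tau$-window, and leveraging (\ref{cotaM}) to absorb the transition constants into the exponential, is the technical crux; the construction of $\tilde\psi$ and the pointwise comparison via $|e^{-a}-e^{-b}|\le|a-b|$ are routine.
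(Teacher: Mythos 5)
Your construction has a compatibility obstruction that makes the key estimate fail just past $t_0$. Setting $\tilde\psi=\varphi$ on $[t_0-\tau,t_0]$ and then imposing $\tilde\psi(t)=e^{-\int_{t-\tau}^t g\tilde\psi}$ for $t>t_0$ forces a jump at $t_0$: the right-hand limit is $e^{-\int_{t_0-\tau}^{t_0}g\varphi}$, while the prescribed value at $t_0$ is $\varphi(t_0)=e^{-\int_{t_0-\tau}^{t_0}f\varphi}$, and the discrepancy is controlled only by $\int_{t_0-\tau}^{t_0}|f-g|\varphi$. The lemma's hypothesis bounds $|f-g|$ only on $[t_0,t_1]$, not on $[t_0-\tau,t_0]$, so that jump can be of order $M\tau$ rather than $\varepsilon\tau$, and then $|\varphi(t)-\tilde\psi(t)|<2\varepsilon\tau e^{2M(t-t_0)}$ is simply false for $t$ slightly above $t_0$. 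Your sentence claiming that ``on the remaining piece $[t-\tau,t_0]$ one has $\tilde\psi=\varphi$, so\ldots the first contribution [is] absorbed into a clean $\varepsilon\tau$'' conflates the two terms in your own decomposition: $\tilde\psi=\varphi$ on $[t-\tau,t_0]$ annihilates the $g\,|\varphi-\tilde\psi|$ piece but leaves the $|f-g|\,\varphi$ piece untouched, and that is precisely the piece the stated hypothesis does not control. If one were allowed to assume $|f-g|<\varepsilon$ on $[t_0-\tau,t_1]$, your argument would close (a one-line Gronwall on $V(t)=\varepsilon\tau+M\int_{t_0}^t E$ then gives $E(t)\le\varepsilon\tau e^{M(t-t_0)}$, well inside the claimed bound), but as presented there is a genuine hole exactly at the window $(t_0,t_0+\tau)$; any correct proof must either use the larger interval in the hypothesis or choose the initial slice of $\tilde\psi$ so that $\int_{t_0-\tau}^{t_0}g\tilde\psi=\int_{t_0-\tau}^{t_0}f\varphi$, which your choice does not do.

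Two smaller points. First, the ``self-referential formula'' is not a pure method-of-steps shift, because $\int_{t-\tau}^t$ reaches all the way to the current time $t$: on each window one must solve an implicit (Bernoulli/Riccati-type) problem, not merely read off a shift; you assert well-definedness without addressing this, and in particular you never remark on the discontinuity at $t_0$ that the construction produces. Second, your appeal to the lower bound (\ref{cotaM}) on $M$ as ``exactly the quantitative input that keeps the inductive constants balanced'' is misplaced: the paper itself attributes that assumption to Lemma \ref{lemacrucial}, and the Gronwall argument here needs no restriction on $M$ at all; the factors $2$ and $2M$ in the statement are simply generous, not consequences of (\ref{cotaM}).
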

 
\begin{lem}\label{lemacrucial} 
Let  $t_0\geq t_g$, $\tau> 0$ and consider functions $\varphi,\,\psi:[t_0-\tau,\infty)\to(0,1]$  satisfying 
\begin{align*}
  \varphi(t)&=e^{-\int^t_{t-\tau}f(h)\varphi(h)\, dh},\\
  \psi(t)&=e^{-\int^t_{t-\tau}f(h)\psi(h)\, dh}
\end{align*}
for all $t\geq t_0$. Therefore  
$$|\varphi(t)-\psi(t)|< 3M \sqrt{\frac{ (t-t_0)}{ \inf f}} \left(1-e^{-M\tau}\right)^{(t-t_0)/(2\tau)-1/2} $$
for all $t\geq t_0 $ where $\inf f=\inf_{t\in [-\tau,\infty) }f(t)$.
\end{lem}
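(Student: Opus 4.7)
The plan is to work with the fixed-point representation $\varphi(t)=e^{-A(t)}$ and $\psi(t)=e^{-B(t)}$, where $A(t)=\int_{t-\tau}^{t}f\varphi$ and $B(t)=\int_{t-\tau}^{t}f\psi$, and to extract a contraction in a sliding-window norm of $u:=\varphi-\psi$.

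First I record the a priori bounds: since $0\leq f\varphi,\,f\psi\leq M$, one has $A(t),B(t)\in[\tau(\inf f)e^{-M\tau},\,M\tau]$, and hence $\varphi(t),\psi(t)\in[e^{-M\tau},1]$. A direct computation gives the identity
\[|u(t)|=\max(\varphi(t),\psi(t))\bigl(1-e^{-|A(t)-B(t)|}\bigr),\]
which, combined with the obvious estimate $|A(t)-B(t)|\leq M\int_{t-\tau}^{t}|u(h)|\,dh$, yields two complementary bounds: a linear one $|u(t)|\leq M\int_{t-\tau}^{t}|u|$ and a saturation one $|u(t)|\leq 1-e^{-M\tau}$.

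The heart of the argument is to upgrade these into a genuine contraction. A naive iteration bounding $|u|$ on $[t,t+\tau]$ by $M\tau\sup_{[t-\tau,t]}|u|$ only decays when $M\tau<1$, so I cannot proceed that way. Instead, I would exploit the sign cancellation forced by the functional equation: since $\mathrm{sign}(u(t))=-\mathrm{sign}\bigl(\int_{t-\tau}^{t}f u\bigr)$ whenever $u(t)\neq 0$, the function $u$ is forced to change sign across each window $[t-\tau,t]$, and this cancellation makes $\bigl|\int f u\bigr|$ strictly smaller than $\int f|u|$. Quantifying this oscillation, I would aim to prove a one-step contraction of the form $V(t+2\tau)\leq (1-e^{-M\tau})V(t)$ for $V(t):=\sup_{h\in[t-\tau,t]}|u(h)|$; it is here that the condition (\ref{cotaM}), $M\geq 1/(4\tau+1)$, should enter in order to balance the two regimes (small integral versus saturation).

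Iterating over $k\sim(t-t_{0})/(2\tau)$ windows immediately yields the geometric factor $(1-e^{-M\tau})^{(t-t_{0})/(2\tau)-1/2}$, the $-\tfrac{1}{2}$ absorbing the half-window boundary effect. The prefactor $3M\sqrt{(t-t_{0})/\inf f}$ I expect to arise from a Cauchy--Schwarz passage between $L^{1}$ and $L^{2}$ norms of $u$ on the successive windows, together with the lower bound $\int_{t-\tau}^{t}f\,u^{2}\geq (\inf f)\int_{t-\tau}^{t}u^{2}$, which converts accumulated $L^{2}$ mass into a pointwise bound and picks up the $\sqrt{t-t_{0}}$ growth. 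The main obstacle is the contraction step itself: extracting exactly the factor $1-e^{-M\tau}$ requires genuinely using sign oscillation rather than treating $|u|$ as if it had constant sign, and verifying that (\ref{cotaM}) makes the book-keeping close.
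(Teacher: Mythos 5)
The paper does not contain a proof of this lemma; it explicitly defers to \cite[Lemma 4.5]{cartabia2022persistence} and only notes that the last of those four proofs is the reason for imposing the bound \eqref{cotaM}. So there is no in-paper proof to compare against. Judged on its own terms, your proposal is an honest outline, not a proof, and the one step you flag as ``the main obstacle'' is in fact where the argument is missing rather than merely unpolished.

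Your preparatory observations are sound: the identity $|u(t)|=\max(\varphi,\psi)\bigl(1-e^{-|A-B|}\bigr)$, the linear bound $|u(t)|\le M\int_{t-\tau}^{t}|u|$, the saturation bound $|u(t)|\le 1-e^{-M\tau}$, and the sign relation $\operatorname{sign}(u(t))=-\operatorname{sign}\bigl(\int_{t-\tau}^{t}fu\bigr)$ are all correct and relevant. The gap is that you assert, but do not derive, a one-step sup-norm contraction $V(t+2\tau)\le(1-e^{-M\tau})V(t)$ with $V(t)=\sup_{[t-\tau,t]}|u|$. Nothing in what precedes it makes $\bigl|\int_{t-\tau}^{t}fu\bigr|$ quantitatively smaller than $\int_{t-\tau}^{t}f|u|$: knowing that $u$ changes sign on each window says nothing about the relative $L^1$ masses of its positive and negative parts, so the cancellation you invoke has no lower bound. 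Moreover, the contraction you propose is in tension with the stated conclusion: if $V(t+2\tau)\le(1-e^{-M\tau})V(t)$ held, iteration would give $|u(t)|\lesssim(1-e^{-M\tau})^{(t-t_0)/(2\tau)}$ with no polynomial loss, strictly \emph{stronger} than the lemma's $3M\sqrt{(t-t_0)/\inf f}\,(1-e^{-M\tau})^{(t-t_0)/(2\tau)-1/2}$ for $t-t_0$ large; the $\sqrt{t-t_0}$ factor in the target is a signal that the true argument is \emph{not} a clean geometric contraction of the sup norm, but rather something that accumulates error window by window (which is exactly what your later Cauchy--Schwarz / $L^{2}$-mass remarks hint at, yet those remarks contradict the clean contraction you just claimed). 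Finally, the hypothesis \eqref{cotaM}, $M\ge 1/(4\tau+1)$, is named but never actually used, so the mechanism by which it ``makes the book-keeping close'' is left entirely open. As it stands the proposal identifies the right objects and several correct inequalities, but the central quantitative step --- the one that produces both the decay rate $1-e^{-M\tau}$ and the prefactor $3M\sqrt{(t-t_0)/\inf f}$ --- is missing.
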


\section{Proof of uniform persistence for the non-autonomous case}\label{pruebapersistencia}

\subsection{Ideas for the proof}

To prove Theorem \ref{persistenciauniforme} we refine the proof of \cite[Theorem 2.1]{cartabia2022persistence}. Observe that if System (\ref{Main System}) is persistent then it satisfies condition given by Inequality (\ref{condicion}). To prove uniform persistence by contradiction, consider $x$ small enough. Then $y$ is also small and then $s\approx z^*$ (see Lemma \ref{funcionz}) which implies that $\varphi \approx \psi $ and  
$$\int p(z^*)\varphi\approx \int p(s)\psi, $$
which is larger than the integral of $D$ if we assume  Inequality (\ref{condicion}). Finally, by Equation (\ref{eq: 3.7}) $x$ can not tend to zero when the time goes to infinity and we get the contradiction.

Furthermore, note that $x$ at the begging might be extremely small. Then we firstly prove that there is a time $T^{in}_*$ (that depends on the size of the norm of the initial condition  $(s^{in},x^{in})$ and the value of $x$ at time $\tau$) for which we can ensure that $x$ grows \emph{enough} (see Inequality (\ref{testrella})). Finally, once that $x$ is large enough we can prove that it is \emph{away from zero} for all remaining positive times.

\subsection{Auxiliary lemma for the non-autonomous chemostat model}

The following result is a convenient adaptation of  \cite[Lema 4.5]{cartabia2022persistence}.

\begin{lem}\label{lemaprincipal}
Let  $\tau\geq 0$ and functions  $\varphi,\,\psi:[-\tau,\infty)\to(0,1]$  satisfy 
\begin{align*}
  \varphi(t)&=e^{-\int^t_{t-\tau}f(h)\varphi(h)\, dh},\\
  \psi(t)&=e^{-\int^t_{t-\tau}g(h)\psi(h)\, dh}
\end{align*}
for $t\geq  t_g$. Given  $\eta >0$, there exist positive  constants $\varepsilon$  and $\tilde T $ such that: for all $t_0\geq t_g$ and $I\geq 0$, if 
 $$|f(t)-g(t)|<\varepsilon$$ 
for $t\in [t_0,t_0+\tilde T+I]$,   then
\begin{equation}\label{desigualdadlemaprincipal}
  |\varphi(t)-\psi(t)|<\frac{\eta}{4M} 
\end{equation}
for all  $t\in [t_0+\tilde T,t_0+\tilde T+I]$. 
\end{lem}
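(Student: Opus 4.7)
The plan is to produce the constants $\varepsilon$ and $\tilde T$ by combining Lemma \ref{hermana} with Lemma \ref{lemacrucial} via a triangle inequality, using a \emph{sliding base point} so that the Gr\"onwall factor $e^{2M(t-t_0)}$ from Lemma \ref{hermana} does not force $\varepsilon$ to depend on $I$. The case $\tau=0$ is trivial because both defining integrals vanish and $\varphi\equiv\psi\equiv 1$; so assume $\tau>0$.

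Fix any target time $t^{*}\in[t_0+\tilde T,\,t_0+\tilde T+I]$ and set the local base point $t_0^{*}:=t^{*}-\tilde T$. Then $t_g\leq t_0\leq t_0^{*}\leq t^{*}$ and $[t_0^{*},t^{*}]\subset[t_0,\,t_0+\tilde T+I]$, so $|f-g|<\varepsilon$ on this subinterval. Lemma \ref{hermana}, applied with base point $t_0^{*}$ and $t_1=t^{*}$, produces a function $\tilde\psi_{t^{*}}:[t_0^{*}-\tau,\infty)\to(0,1]$ solving the $g$-equation for $t>t_0^{*}$ and satisfying
\[
|\varphi(t^{*})-\tilde\psi_{t^{*}}(t^{*})|<2\varepsilon\tau\,e^{2M\tilde T}.
\]
Since both $\tilde\psi_{t^{*}}$ and $\psi$ solve the same $g$-equation for $t\geq t_0^{*}\geq t_g$, Lemma \ref{lemacrucial} (applied with $f$ replaced by $g$ and base point $t_0^{*}$) gives
\[
|\tilde\psi_{t^{*}}(t^{*})-\psi(t^{*})|<3M\sqrt{\tfrac{\tilde T}{\inf g}}\bigl(1-e^{-M\tau}\bigr)^{\tilde T/(2\tau)-1/2}.
\]
Summing these inequalities yields a bound on $|\varphi(t^{*})-\psi(t^{*})|$ that depends only on $\varepsilon,\tilde T,M,\tau,\inf g$, and crucially not on $t_0$ nor on $I$.

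It remains to choose the constants. Because $(1-e^{-M\tau})^{\tilde T/(2\tau)}$ decays exponentially in $\tilde T$ while $\sqrt{\tilde T}$ grows only polynomially, first fix $\tilde T$ large enough that the second summand is below $\eta/(8M)$; then, with $\tilde T$ frozen, pick $\varepsilon$ so small that $2\varepsilon\tau e^{2M\tilde T}<\eta/(8M)$, which establishes \eqref{desigualdadlemaprincipal}. The main obstacle is exactly the tension between the two auxiliary estimates: Lemma \ref{hermana}'s bound grows exponentially in the elapsed time from the base point, while Lemma \ref{lemacrucial}'s bound decays, so a naive choice of a single base point at $t_0$ would force $\varepsilon$ to depend on $I$. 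The sliding-base-point trick removes this dependence by exploiting the translation freedom present in both auxiliary lemmas, together with the fact that the history of $\tilde\psi_{t^{*}}$ on $[t_0^{*}-\tau,t_0^{*}]$ can be taken equal to that of $\varphi$ at no cost.
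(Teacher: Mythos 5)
Your proposal is correct and follows essentially the same route as the paper's own proof: the "sliding base point" you introduce (your $t_0^{*}=t^{*}-\tilde T$) is exactly the paper's parameter $a\in[0,I]$ with base $t_0+a$, and both arguments combine Lemma \ref{hermana} and Lemma \ref{lemacrucial} through a triangle inequality, fixing $\tilde T$ first and then $\varepsilon$. One small point worth noting (present but glossed over in the paper as well, which writes $\inf f$ where $\inf g$ should appear): for the Lemma \ref{lemacrucial} bound to vanish uniformly as $\tilde T\to\infty$ you need a positive lower bound on $g$ that does not depend on $g$ itself; this is obtained by additionally requiring $\varepsilon<\tfrac12\inf f$, so that $g\geq f-\varepsilon\geq\tfrac12\inf f$ on the relevant interval, and then choosing $\tilde T$ with $\tfrac12\inf f$ in place of $\inf g$.
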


\begin{proof}
Firstly, note that if  $\tau=0$ then $\varphi(t)=\psi(t)=1$ and there is nothing to prove. Therefore, let us assume that $\tau>0$. Since
$$3M \sqrt{\frac{ t}{     \inf f}  }\left(1-e^{-M\tau}\right)^{t/(2\tau)-1/2} $$
tends to zero as   $t$ tends to infinity, we fix   $\tilde T>0$ such that
$$3M \sqrt{\frac{ t}{     \inf f }} \left(1-e^{-M\tau}\right)^{t/(2\tau)-1/2} <\eta/2,\quad t\geq \tilde T. $$
Moreover, let $\varepsilon>0$ such that 
$$\varepsilon<\frac{ \eta e^{-2M  \tilde T}}{4\tau}.$$
Now let us fix $t_0\geq t_g$, $I\geq 0$, and assume that  
$$|f(t)-g(t)|<\varepsilon$$
for all $t\in [t_0,t_0+\tilde T+I]$. We will show that 
\begin{equation}\label{desigualdadaprobar}
  |f(t_0+\tilde T+a)-g(t_0+\tilde T+a)|<\eta,\quad \textnormal{for all }a\in[0,I]. 
\end{equation}
Let $a\in [0,I]$ and consider $\tilde \psi:[t_0+a-\tau,\infty)\to(0,1]$ the function given by Lemma \ref{hermana} that satisfies both 
$$\tilde \psi(t)=e^{-\int_{t-\tau}^tg(h)\tilde\psi(h)\, dh},$$ 
for all $t>t_0+a$ and
$$|\varphi(t)-\tilde \psi(t)|<2\varepsilon\tau e^{2M(t-t_0-a)}$$
for all $ t\in [t_0+a-\tau,t_0 +\tilde T+I]. $
In particular, the above inequality combined with the definition of $\varepsilon$ implies that
$$|\varphi(t_0+\tilde T+a)-\tilde \psi(t_0+\tilde T+a)|<2\varepsilon\tau e^{2M \tilde T } <\eta/2.$$
Moreover, applying the Lemma \ref{lemacrucial} to  $\psi$ and $\tilde \psi$ we obtain 
\begin{align*}
  |\psi(t)-\tilde \psi(t)|&< 3M \sqrt{\frac{ (t-t_0)}{ \inf f}} \left(1-e^{-M\tau}\right)^{(t-t_0)/(2\tau)-1/2}  <\eta/2
\end{align*}
for $t\geq t_0+\tilde T$. Therefore, it follows that
\begin{align*}
  |\varphi(t_0+\tilde T+a)-\psi(t_0+\tilde T+a)|& \leq |\varphi(t_0+\tilde T+a)-\tilde \psi(t_0+\tilde T+a)|\\
  &\quad +|\tilde\psi(t_0+\tilde T+a)-\psi(t_0+\tilde T+a)|\\
  &<\eta
\end{align*}
and the inequality \eqref{desigualdadaprobar} is proved.
\end{proof}

\subsection{Proof of Theorem \ref{persistenciauniforme}}
 
Set $f(t)=p(z^*(t))$ and fix
\begin{equation}\label{condicionM}
  M=  \max \left\{  p\left(2 \overline \si\right), \frac{1}{4\tau +1} \right\}.
\end{equation} 

\begin{proof} 
Assume that System (\ref{Main System}) is persistent, then Theorem \ref{persistencia} states that there exist positive constants $\eta$ and $T$ such that 
\begin{equation}\label{eq: Hip} 
  \int_{t_1}^{t_2} p(z^*(t-\tau)) \varphi(t-\tau)\, dt> \int_{t_1}^{t_2}\left( D(t)+\eta\right)  \, dt
\end{equation}
for all $t_1> T$, $t_2-t_1> T$ where $\varphi(t)$ is only  dependent on $p(z^*(t))$.
 The remainder of the proof will be divided into four steps with  $\eta$, $T$ and $\varphi(t)$   fixed. 
%Fijemos $\eta$, $T$ y $\varphi$ de aquí al fin de la demostración. Separaremos el resto de la demostración en cuatro steps.

\emph{Step 1.}  Firstly, we  define fundamental constants that do not depend on a particular solution. Since $\si(t)$ is bounded below by a positive constant and $z^*(0)>0$ we deduce that $p(z^*(t))$ is bounded below by a positive constant. As we already have a given $\eta$, let    $\tilde T$ and   $\varepsilon$ be  the constants given by Lemma \ref{lemaprincipal}. Note  that the functions $f(t)$ and $\varphi(t)$ are both fixed. Moreover, it is possible to consider $\varepsilon$ small enough so that 
\begin{equation}\label{segundadesigualdad}
  \varepsilon\leq \eta/4.
\end{equation}
To conclude the first step  we define 
$$
L:=\max_{\xi\in [0,2\overline{s^{(0)}}]} p'(\xi),
$$
and
%lets consider $L>0$ an upper bound for $p'(\xi)$ and $\xi\leq 2 \overline\si $, finally define  
 $$\delta :=\frac{\varepsilon e^{- \overline D(\tilde T+T+2\tau)}}{2L(1+M\tau)} .$$  
 
\emph{Step 2.} Now let $R$ and $\alpha$ be positive constants, we recall the statement of the Theorem \ref{persistenciauniforme}: there is $T^{in}(R,\alpha)\geq 0$ such that for a given $
x(t)\geq \delta$  for all   $t\geq T^{in}.$
Therefore consider $t_0\geq  \tau$ such that 
\begin{equation}\label{definiciont0}
  \left(\overline \si+2R+Rp(R)\tau\right)e^{-\int_0^{t_0- \tau}D(r)\, dr}\leq \min\left\{\frac{\varepsilon}{2L},\overline \si\right\}
\end{equation}
and define
\begin{align*}
  I^{in}&=\max\left\{T,\frac{2}{\eta}\ln\left(\frac{\varepsilon e^{\overline D(t_0+\tilde T)}}{2L\alpha(1+M\tau) }\right)\right\},\\
  T^{in}&=t_0+\tilde T+I^{in}+\tau.
\end{align*}
Given $(s(t),x(t))$ a particular solution of the System (\ref{Main System}) such that $\|(s^{in},x^{in})\|\leq R$ and $x(\tau)\geq \alpha$ we firstly  prove the existence of $T_*^{in}\in [t_0- \tau,T^{in}]$ such that
\begin{equation}\label{testrella}
x(T^{in}_*)\geq \frac{\varepsilon}{2L(1+M\tau)}. 
\end{equation}
%is satisfied for some $T^{in}_*\in[t_0-2\tau,t_0+\tilde T+I^{in}]$.
To obtain a contradiction, we suppose the opposite. In particular,
\begin{equation}\label{xchica}
x(t)<\frac{\varepsilon}{2L(1+M\tau)}   
\end{equation}
for all $t\in [t_0- \tau,T^{in}-\tau]$.

Firstly,  note that for $t\geq 0$ it follows 
\begin{equation}\label{cotay} 
  y(t) \leq \int_{t-\tau}^t x(h)p(s(h))   \,dh\leq   \int_{t-\tau}^t \|x_t\|p(\|s_t\|)  \,dh\leq  \|x_t\|p(\|s_t\|)\tau 
\end{equation} where we use the notation introduced in (\ref{definicionenC}).

Secondly, using   Lemma \ref{funcionz} and the  previous inequality   evaluated in $t=0$, combined with the properties of the initial data, it turns out that 
\begin{equation}\label{otracota}
\begin{aligned}
  |(z^*-s-x-y)(t)|&\leq |(z^*-s-x-y)(0)|e^{-\int_0^{t }D(r)\,dr}\\
  &\leq \left(z^*(0) +||s^{in}||+||x^{in}||+y(0)\right)e^{-\int_0^{t }D(r)\,dr}\\
  &\leq \left(\overline s+2R+Rp(R)\tau\right) e^{-\int_0^{t }D(r)\,dr}.
\end{aligned}
\end{equation}
Now, from the solution $(s,x)(t)$ we define $g(t):=p(s(t))$ and consider the function  $\psi(t)=\psi(s,x)(t)$ given by (\ref{eq: 4.2}). 
We claim that if $t_0=t_g$ then $p(s(t))$ verifies (\ref{condiciong}). Indeed, let us consider $t\geq t_0-\tau$. Then,    using that $z^*(t)\leq \overline s$ for all $t$  (recall Definition (\ref{z*})), and Inequalities (\ref{definiciont0}) and (\ref{otracota})  it follows that
\begin{align*}
  s(t)&\leq (s+x+y)(t)\leq |(z^*-s-x-y)(t)|+z^*(t)\leq 2\, \overline s,
\end{align*}
and (\ref{condiciong}) holds by the definition of $M$ in (\ref{condicionM}).
We remark that we construct $p(s(t))$ satisfying (\ref{condiciong}) for the purpose of applying  Lemma  \ref{lemaprincipal}. 

Next, combining (\ref{cotay}) with   (\ref{condiciong}) and (\ref{xchica})   we obtain
\begin{align*}
  y(t)&\leq    \frac{M \tau\varepsilon}{2L(1+M\tau)}
\end{align*}
for all   $t\in[t_0,T^{in}-\tau]$.

Now, using this last inequality together with (\ref{definiciont0}), (\ref{xchica}) and (\ref{otracota}), it turns out that 
\begin{align*}
  |(z^*-s)(t)|&\leq |(z^*-s-x-y)(t)|+(x+y)(t)\\
  &\leq \left(\overline s+2R+Rp(R)\tau\right) e^{-\int_0^{t }D(r)dr}+\frac{ \varepsilon}{2L(1+M\tau)}+ \frac{M \tau\varepsilon}{2L(1+M\tau)}\\
  &\leq \frac{\varepsilon}{2L}+\frac{ \varepsilon}{2L(1+M\tau)}+ \frac{M \tau\varepsilon}{2L(1+M\tau)}\\
  &= \varepsilon/L , 
\end{align*}
for  all $t\in[t_0,T^{in}-\tau]$. Then, by the mean value Theorem and the definition of $L$, it follows that
$$|p(z^*(t))-p(s(t))|\leq L|(z^*-s)(t)|<\varepsilon$$
for   the same interval of time. Recall that $T^{in}-\tau=t_0+\tilde T+I^{in}$ and apply Lemma  \ref{lemaprincipal} with $I=I^{in}$. This enables to deduce  that \eqref{desigualdadlemaprincipal} holds, which together with \eqref{segundadesigualdad}  implies
\begin{align*}
  |p(z^*(t))\varphi(t)-p(s(t))\psi(t)|&\leq |p(z^*(t))\varphi(t)-p(z^*(t))\psi(t)| \\
  &\quad +|p(z^*(t))\psi(t)-p(s(t))\psi(t)|\\
  &\leq p(z^*(t))|\varphi(t)-\psi(t)| +|p(z^*(t))-p(s(t))|\psi(t)\\
  &\leq M \frac{\eta}{4M}+\frac{\eta}{4 }\\
  &=\eta/2,
\end{align*}
and then
\begin{equation}\label{eq: 4.11}
 p(z^*(t))\varphi(t)-\eta/2\leq p(s(t))\psi(t)   
\end{equation}
for $t\in[t_0 +\tilde T,T^{in}-\tau]$.

%\begin{align*}
%  |p(z^*(t-\tau))\varphi(t)-p(s(t-\tau))\psi(t)|&\leq |p(z^*(t-\tau))\varphi(t)-p(z^*(t-\tau))\psi(t)|\\
%  &\quad +|p(z^*(t-\tau))\psi(t)-p(s(t-\tau))\psi(t)|\\
%  &\leq p(z^*(t-\tau))|\varphi(t)-\psi(t)|\\
%  &\quad +|p(z^*(t-\tau))-p(s(t-\tau))|\psi(t)\\
%  &\leq M \frac{\eta}{4M}+\frac{\eta}{4 }\\
%  &=\eta/2
%\end{align*}
Since $t_0+\tilde T-\tau=T^{in}-I^{in}$, and from (\ref{eq: Hip}) and (\ref{eq: 4.11})  
%we obtain  
%$$\int_{t_0 +\tilde T}^{t_0 +\tilde T+I^{in}}p(z(h-\tau))\varphi(h-\tau)-D(h)-\eta\, dh\geq 0, $$
 we deduce that  
\begin{equation}\label{eq: 4.12}
    \begin{aligned}
    &\int_{T^{in}-I^{in}}^{T^{in} }\left(p(s(h-\tau))\psi(h-\tau)-D(h) \right)dh   \\
&\hspace{1cm} \geq   \int_{T^{in}-I^{in}}^{T^{in} }\left(p(z^*(h-\tau))\varphi(h-\tau)- \eta/2 -D(h)\right) \, dh\\
   &\hspace{1cm} \geq   \int_{T^{in-I^{in}}}^{T^{in} } \eta/2   \, dh\\
 &\hspace{1cm}   =\displaystyle I^{in}\eta/2   . 
    \end{aligned}
\end{equation}

%\begin{align*}
%  &\int_{t_0 +\tilde T+\tau}^{t_0 +\tilde T+I^{in}+\tau}\left(p(s(h-\tau))\psi(h-\tau)-D(h) \right)\\
%  &\quad \geq \int_{t_0+\tilde T+\tau}^{t_0+\tilde T+I^{in}+\tau}\left(p(z^*(h-\tau))\varphi(h-\tau)- \eta/2 -D(h)\right) \, dh\\
%  &\quad \geq \int_{t_0+\tilde T+\tau}^{t_0+\tilde T+I^{in}+\tau} \eta/2   \, dh\\
%  &\quad =I^{in}\eta/2.
%\end{align*}
 From the second equation of the System (\ref{Main System}) we obtain the following inequality
\begin{equation}\label{cotainferiorderivada}
 \frac{d}{dt}x(t)\geq  -\overline Dx(t),
\end{equation}
which combined with  the assumption $x(\tau)\geq \alpha$ gives 
$$x(T^{in}-I^{in})=x(t_0+\tilde T+\tau)\geq e^{-\overline D(t_0+\tilde T )}x(\tau)\geq e^{-\overline D(t_0+\tilde T )}\alpha.$$
Now by using      (\ref{eq: 3.7}),  the above inequality and (\ref{eq: 4.12}), together with   
 the definition of $I^{in}$ we have
\begin{align*}
  x(T^{in} )&=x(T^{in}-I^{in})e^{\int_{T^{in}-I^{in}}^{T^{in}}\left(p(s(h-\tau))\psi(h-\tau)-D(h)\right)\, dh} \\
  &\geq e^{-\overline D(t_0+\tilde T)}\alpha e^{I^{in}\eta/2}\\
  &\geq \frac{\varepsilon}{2L(1+M\tau)}, 
\end{align*}
which contradicts \eqref{xchica} and, therefore, there exists  $T^{in} _*\leq T^{in}$ that satisfies \eqref{testrella}.

\emph{Step 3.} It remains to see that $ x(t)\geq \delta $ for all $t\geq T^{in}$. We will prove that the above holds for  $t\geq T^{in}_*$. To do this, define 
$$\mathcal{S}=\left\{t\geq T^{in}_*:x(h)\geq \delta \text{ for all }h\in[T^{in}_*,t]\right\}.$$
We claim that 
\begin{equation}\label{primerapertenencia}
  T^{in}_*+\tilde T+T+2\tau\in\mathcal{S}.
\end{equation}  
Let $h\in[T^{in}_*,T^{in}_*+\tilde T+T+2\tau]$, 
 as before  (\ref{eq: 3.7}),  (\ref{cotainferiorderivada}), (\ref{eq: 4.12}), combined with the properties of $T^{in}_*$ and the definition of $\delta$, implies
 $$x(h)\geq x(T^{in}_*)e^{-\int_{T^{in}_*}^{h}D(r)\, dr}\geq \frac{\varepsilon e^{-\overline D(h-T^{in}_*)}}{2L(1+M\tau)} \geq \frac{\varepsilon e^{-\overline D( \tilde T+T+2\tau)}}{2L(1+M\tau)} \geq \delta $$
and (\ref{primerapertenencia}) holds.

Next we define $t^*:=\sup\mathcal{S}$ and claim that  $t^*=\infty$. Suppose, contrary to our claim, that  $T^{in}_*+\tilde T+T+2\tau<t^*<\infty.$ By continuity of $x(t)$ it  follows that
 \begin{equation}\label{nuevoabsurdo}
  x(t^*)=\delta .
\end{equation} 
Again by \eqref{cotainferiorderivada},  for all  $t\in[t^*-\tilde T -T-2\tau,t^*]$ we have that 
$$x(t^*)\geq x(t)e^{-\int_{t}^{t^*}D(r)\, dr}\geq x(t) e^{-\overline D(\tilde  T +T+2\tau)},$$
which implies
$$x(t)\leq \delta e^{\overline D(\tilde T+T+2\tau)}=\frac{\varepsilon}{2L(1+M\tau)} $$
  due to the  definition of $\delta$. We repeat the reasoning done in Step 2, but now we apply the Lemma \ref{lemaprincipal} considering $I=T$. Note that
\begin{align*}
  y(t)& \leq  \frac{M \tau\varepsilon}{2L(1+M\tau)}
\end{align*}
%for   $t\in [t^*-\tilde T-T-\tau,t^*] $ 
for $ t\in [t^*-\tilde T-T-\tau,t^*]$, therefore we can deduce that
$$|p(z^*(t-\tau))\varphi(t-\tau)-p(s(t-\tau))\psi(t)| \leq  \eta/2$$
for $t\in[t^* -T ,t^*]$, and  consequently
$$\int_{t^* -T}^{t^*}\left(p(s(h-\tau))\psi(h-\tau)-D(h) \right)\, dh \geq  T\eta/2.$$
Finally, combining (\ref{eq: 3.7}), the previous inequality, and the fact that $t^*-T\in \mathcal{S}$, it follows that
$$x(t^*)= x(t^*-T)e^{\int_{t^*-T}^{t^*}\left[p(s(h-\tau))\psi(h-\tau)-D(h)\right]dh}\geq x(t^*-T)e^{T\eta/2}>x(t^*-T)\geq \delta$$ 
%because $t^*-T\in \mathcal{S}$. 
Hence we have a contradiction with \eqref{nuevoabsurdo},  so $t^*$ is infinite and the theorem is proved.
\end{proof}

\section{Proofs for periodic case}\label{pruebasperiodico}

\subsection{Ideas for the proofs}
By Theorem \ref{persistenciaperiodica} if
$$ \langle p(z^*) \varphi\rangle >\langle D\rangle$$
then System (\ref{Main System}) is persistent. Therefore, Theorem \ref{teoremadelaextincion} is a converse criteria, namely, that every solution tends to the extinction  if and only if
\begin{equation}\label{extincion}
  \langle p(z^*)\varphi\rangle \leq \langle D\rangle.
\end{equation}
Thus this inequality is a threshold for the vanishing of biomass. We prove Theorem \ref{teoremadelaextincion} by contradiction. If $$\limsup_{t\to\infty} x(t)>0$$  then $z^*$ is larger than $s$ from a certain moment. By Lemma \ref{vuelta} $\int p(z^*)\varphi$ is larger than $\int p(s)\psi$ (except for a constant). Therefore, if 
$$\langle p(z^*)\varphi\rangle< \langle D\rangle$$
then 
$$e^{\int (p(z^*(t-\tau))\varphi(t-\tau)-D(t))dt}$$
goes to zero, which implies that
$$e^{\int (p(s(t-\tau))\varphi(t-\tau)-D(t))dt}$$
goes to zero and using (again) Equality (\ref{eq: 3.7}) we get a contradiction. In the case that 
$$\langle p(z^*)\varphi\rangle= \langle D\rangle$$
we use  ideas  from the prove of \cite[Theorem 1]{amster2020dynamics} to show that 
$$\int (p(z^*)\varphi-p(s)\psi)$$
tends to infinity and repeat the previous idea to get a contradiction.

 As mentioned, the goal of Theorem \ref{teoremadeexistencia} is to use the Horn's fixed point Theorem  \cite[Theorem 6]{horn1970some} combined with Theorem \ref{persistenciauniforme}. Therefore,  we define three convex sets $S_0\subset S_1\subset S_2\subset \mathcal{C}$, with $S_0$ and $S_2$ compact, and $S_1$ open (relative to $S_2$).   By inspiration in the proof of \cite[Theorem 1]{teng2011periodic} we use  Arzelà–Ascoli Theorem to prove the compactness of $S_0$ and $S_2$. Therefore, applying Horn's Theorem to the Poincaré operator we get the existence of a positive $\omega$-periodic solution if the System (\ref{Main System}) is persistent. We emphasize that  we   give an explicit representation of $S_1$ as the intersection of $S_2$ with an open set. This ensures that $S_1$ is open relative to $S_2$ and that the hypothesis of the Horn's Theorem are satisfied.

% We make an observation about the proof of \cite[Theorem 1]{teng2011periodic}. We can not get the reason why the set $S_t^{(1)}$ defined there is open (or open relative to $S_t^{(2)}$). However, this is not a problem in the present article since we   give an explicit representation of $S_1$ as the intersection of $S_2$ with an open set. We have written to the corresponding author, but at the time of this publication we have not received a response.\todo{Yo creo que no debemos incluir esta frase.}

Finally, to introduce the proof of Theorem \ref{teoremadeestabilidad} first consider the case without delay ($\tau=0$). Fix $(s_1,x_1)$ a solution with not null initial condition and $(s_2,x_2)$ a positive $\omega$-periodic solution. Note that $s_1+x_1\approx z^*$ for large times and $s_2+x_2= z^*$ which implies that
$x_1-x_2\approx -(s_1-s_2)$
and then
\begin{align*}
    \frac{d}{dt}(s_1-s_2)&=-D(s_1-s_2)-x_1p(s_1)+x_2p(s_2)\\
    &=-D(s_1-s_2)-x_1(p(s_1)-p(s_2))-(x_1-x_2)p(s_2)\\
    &=-D(s_1-s_2)-x_1p'(\zeta)(s_1-s_2)-(x_1-x_2)p(s_2)\\
    &\approx -(s_1-s_2)(D+x_1p'(\zeta)-p(s_2))\\
\end{align*}
where, for simplicity, we do not write the dependence of time. Observe that $\langle D\rangle =\langle p(s_2)\rangle$ since $x_2$ is $\omega$-periodic (recall (\ref{eq: 3.7}) and note that $\psi=1$). Then, if the system is persistent, $x_1$ is bounded below by a positive constant (from a certain time) and $s_1-s_2$ tends to zero when time goes to infinity. This shows that $(s_2,x_2)$ is attractive. To apply this idea to the general case ($\tau\geq 0$) and inspired in a generalized Gronwall-type inequality given in \cite[Lemma 2.4]{choi2017cucker}, the goal is to define an appropriated function $w$ that bounds $s_1-s_2$ and tends to zero when the time tends to infinity.

\subsection{Auxiliary lemmas for the periodic chemostat}
We need the following Lemma inspired by  \cite[Lemma 4.3]{cartabia2022persistence}.  
\begin{lem}\label{vuelta}
Let $t_0\geq 0$ and $\tau>0$. Consider $\varphi,\,\psi:[t_0-\tau,\infty)\to(0,1]$ satisfying 
\begin{align*}
    \varphi(t)&=e^{-\int^t_{t-\tau}f(h)\varphi(h)\, dh}, \\
    \psi(t)&=e^{-\int^t_{t-\tau}g(h)\psi(h)\, dh}  
\end{align*}
for all $t> t_0$ and assume that $ f(t)\geq  g(t)$ for all  $t\geq t_0$. Then    
$$\int_{t_1 }^{t_2}f(h-\tau)\varphi(h-\tau)\, dh+\tau M\geq  \int_{t_1 }^{t_2}  g(h-\tau)\psi(h-\tau) \, dh$$
for all  $t_2 \geq t_1\geq t_0$.
\end{lem}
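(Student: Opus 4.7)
The first move is a change of variable $u = h - \tau$ on both sides of the conclusion, turning the target inequality into
\begin{equation*}
\int_{t_1 - \tau}^{t_2 - \tau} \bigl(f(h)\varphi(h) - g(h)\psi(h)\bigr)\, dh \;\geq\; -\tau M.
\end{equation*}
Since $f\varphi - g\psi$ is a priori sign-indefinite, the naive pointwise bound only produces $-M(t_2 - t_1)$, which is useless once $t_2 - t_1$ exceeds $\tau$; one must therefore exploit cancellations on $\tau$-sized blocks coming from the functional equations.

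The key identity comes from $-\ln\varphi(t) = \int_{t-\tau}^t f(h)\varphi(h)\, dh$ and $-\ln\psi(t) = \int_{t-\tau}^t g(h)\psi(h)\, dh$: subtracting gives, for every $t \geq t_0$,
\begin{equation*}
\int_{t-\tau}^t (f\varphi - g\psi)(h)\, dh \;=\; \ln\!\bigl(\psi(t)/\varphi(t)\bigr),
\end{equation*}
and as a by-product $\varphi(t), \psi(t) \in [e^{-\tau M}, 1]$. Next, a comparison principle established by arguing by contradiction at the first time where $\varphi$ would exceed $\psi$ --- using $f \geq g$ together with the functional equations on a length-$\tau$ window --- yields $\varphi(t) \leq \psi(t)$ for $t \geq t_0$, so that each such length-$\tau$ integral is non-negative.

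The proof then concludes by a two-case decomposition. If $t_2 - t_1 \leq \tau$, the trivial bound $f\varphi - g\psi \geq -g\psi \geq -M$ gives $\int_{t_1 - \tau}^{t_2 - \tau}(f\varphi-g\psi) \geq -M(t_2-t_1) \geq -\tau M$. If $t_2 - t_1 > \tau$, set $N = \lfloor (t_2 - t_1)/\tau \rfloor \geq 1$ and partition $[t_1 - \tau, t_2 - \tau]$ into $N$ consecutive length-$\tau$ pieces of the form $[a_k - \tau, a_k]$ with $a_k = t_1 + (k-1)\tau \geq t_0$ (for $k = 1, \dots, N$), plus a terminal remainder of length $r \in [0, \tau)$. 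By the comparison, each full piece contributes $\ln(\psi(a_k)/\varphi(a_k)) \geq 0$; the remainder is bounded below by $-Mr > -\tau M$, so summing yields the desired lower bound.

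The main obstacle I anticipate is making the comparison $\varphi \leq \psi$ watertight: the functional equations are implicit and the initial data of $\varphi, \psi$ on $[t_0 - \tau, t_0]$ is left unconstrained by the hypotheses, so the contradiction argument must be handled with care, especially on the transient interval $[t_0, t_0 + \tau]$ where each $\tau$-window integral mixes initial data with the region where $f \geq g$.
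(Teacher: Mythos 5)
Your overall strategy overlaps with the paper's in the opening moves (change of variable, the identity $\int_{t-\tau}^t (f\varphi - g\psi)\,dh = \ln(\psi(t)/\varphi(t))$, bounding the loss by $\tau M$), but there is a genuine gap at the step you yourself flag as the ``main obstacle'': the comparison principle $\varphi(t)\le\psi(t)$ for all $t\ge t_0$ is simply false under the lemma's hypotheses, and it cannot be patched. The hypotheses leave the initial data $\varphi|_{[t_0-\tau,t_0]}$, $\psi|_{[t_0-\tau,t_0]}$ completely free in $(0,1]$, and impose $f\ge g$ only on $[t_0,\infty)$; nothing constrains $f,g$ on $[t_0-\tau,t_0)$. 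Since for $t$ slightly larger than $t_0$ one has $\varphi(t)\approx e^{-\int_{t_0-\tau}^{t_0}f\varphi}$ and $\psi(t)\approx e^{-\int_{t_0-\tau}^{t_0}g\psi}$, one can always arrange initial data (e.g.\ $\varphi$ small and $g$ large near $t_0$) so that $\varphi(t_0^+)>\psi(t_0^+)$. Your ``first-crossing'' contradiction argument has no first crossing to contradict, because $\varphi>\psi$ can already hold at $t_0$; moreover the inequality can flip back and forth afterward, so there is no transient after which it settles. Consequently the fixed grid $a_k=t_1+(k-1)\tau$ is fatal: whenever $\varphi(a_k)>\psi(a_k)$, the $k$-th block contributes $\ln(\psi(a_k)/\varphi(a_k))<0$, and these negative terms can accumulate past $-\tau M$ since there is no telescoping between consecutive blocks.

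What rescues the argument in the paper is a second, pointwise observation you did not use: on any region where $\varphi\ge\psi$, the hypothesis $f\ge g$ gives directly
\begin{equation*}
f\varphi-g\psi \;\ge\; g\varphi-g\psi \;=\; g(\varphi-\psi)\;\ge\;0,
\end{equation*}
so that region is harmless with no integral identity needed. The paper then builds an \emph{adaptive} decomposition of $[t_1-\tau,t_2-\tau]$: working backwards from $t_2-\tau$, it alternates maximal subintervals where $\varphi\ge\psi$ (handled by the pointwise bound above) with length-$\tau$ windows $[h_n-\tau,h_n]$ whose \emph{right} endpoint $h_n$ is a point where $\varphi(h_n)\le\psi(h_n)$ (handled by the log identity, which then yields $\ln(\psi(h_n)/\varphi(h_n))\ge 0$). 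This guarantees every full piece is nonnegative regardless of the sign of $\varphi-\psi$ at the grid points, and only a single leftover piece of length at most $\tau$ at the left edge contributes the $-\tau M$. So the correct mechanism is: pointwise bound on the $\varphi\ge\psi$ regions, plus $\tau$-windows anchored at the crossings, not a global ordering of $\varphi$ and $\psi$.
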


\begin{proof} 
The proof will be divided into three steps.
 
\emph{Step 1.} Observe that if  $ \varphi(  t )\leq  \psi(  t ) $ for $t\geq t_0$, then we get   
\begin{align*}
  e^{\int_{t -\tau}^{t} \left(f(h)\varphi(h)-g(h)\psi(h)\right) \, dh}&=\frac{\psi(t)}{\varphi(t)}\geq  1 
\end{align*}
which implies
$$\int_{t-\tau}^{t} \left(f(h)\varphi(h)-g(h)\psi(h) \right)\, dh\geq 0.$$

On the other hand, if there are $h_2\geq h_1\geq t_0$ such that  $  \varphi(  t )\geq \psi(  t ) $ for all $t\in [h_1,h_2]$ then  
\begin{equation*} 
   \int_{h_1}^{h_2} \left(f(h)\varphi(h)-g(h)\psi(h) \right)\, dh \geq 0, 
  \end{equation*}
since 
$$f(t)\varphi(t)-g(t)\psi(t)\geq g(t)\varphi(t)-g(t)\varphi(t)=0.$$

\emph{Step 2.} We claim that there is a finite decreasing sequence $\{h_n\}_{1\leq n\leq N}\subset\R$ with the properties that $ \varphi( h_n )\leq  \psi(  h_n)$ for all $1\leq n\leq N-1$ and that $ \varphi( t )\geq  \psi(  t)$ if  
$$t\in I=\bigcup_{n=1}^{N-1}[h_{n+1},h_n-\tau]\cup [h_1,t_2-\tau] .$$
  Indeed, define  
\begin{equation*}
  h_1 =\left\{\begin{aligned}
    &t_2-\tau &\text{if }    \varphi( t_2-\tau )< \psi(  t_2-\tau),\\
   &\inf\left\{t\geq   t_1-\tau:   \varphi(  h )\geq \psi(  h ) \text{ for all } h\in[t,t_2-\tau]\right\}& \text{otherwise.}
  \end{aligned}\right.
\end{equation*}
For $n\geq 1$ and while $t_1\leq h_{n }-\tau$, define
\begin{equation*}
  h_{n+1} =\left\{\begin{aligned}
    &h_{n }-\tau& \text{if }  \varphi( h_{n }-\tau)< \psi( h_{n }-\tau),\\
   &\inf\left\{\begin{array}{l}
    t\geq   t_1-\tau :   \varphi(  h )\geq \psi(  h )\\
   \text{ for all }h\in[t,h_n-\tau]
   \end{array}
   \right\} &\text{otherwise.}
  \end{aligned}\right.
\end{equation*}
 Observe that the sequence ends when $t_1> h_{N }\geq t_1-\tau$ satisfying the statement of the claim. Moreover, note that as consequence of the definition of the  decreasing sequence $\{h_n\}_{1\leq n\leq N}$  and the Step 1 we obtain the following inequalities
\begin{equation}\label{caminoenreverso}
  \begin{array}{rcl}
  \int_{h_n-\tau }^{h_n}f(h)\varphi(h)\, dh&\geq&  \int_{h_n-\tau }^{h_n} g(h)\psi(h) \, dh, \textnormal{ for all }1\leq n\leq N-1,\\ \\
  \int_If(h)\varphi(h)\, dh&\geq & \int_I  g(h)\psi(h) \, dh. 
  \end{array}
\end{equation} 
Whence  we obtain
$$\int_{h_N }^{t_2-\tau}f(h)\varphi(h)\, dh\geq  \int_{h_N }^{t_2-\tau}  g(h)\psi(h) \, dh.$$
 
\emph{Step 3.} Finally, observe that 
\begin{align*}
  \int_{t_1-\tau}^{t_2-\tau}f(h)\varphi(h)\, dh&\geq \int_{h_N}^{t_2-\tau}f(h)\varphi(h)\, dh\\
  &\geq \int_{h_N -\tau}^{t_2-\tau}  g(h)\psi(h) \, dh \\
  &= \int_{t_1 -\tau}^{t_2-\tau}  g(h)\psi(h) \, dh -\int_{t_1-\tau }^{h_N-\tau}  g(h)\psi(h) \, dh \\
  &\geq \int_{t_1-\tau }^{t_2-\tau} g(h)\psi(h) \, dh-\tau M,  
\end{align*}
  and the lemma is proved.
\end{proof}

The following lemma is fundamental to the proof of the Theorem \ref{teoremadeestabilidad}. Applied to a periodic solution of the System (\ref{Main System}), it states that  $(x+y)(t-\tau)$ diluted over $\tau$ units of times is exactly $x(t)$. 

\begin{lem}\label{lemamuyinteresante}
Every solution $(s,x)(t)$ of the System 
(\ref{Main System}) with not null initial condition, and the corresponding function $y(t)$ defined in (\ref{funciony}) satisfy
$$ x(t) - (x+y)(t-\tau)e^{-\int_{t-\tau}^tD(r)dr}  =\left(x(\tau)e^{\int_0^\tau D(r)\,dr}-(x+y)(0)\right)e^{-\int_{0}^tD(r)dr} $$
for all $t\geq \tau.$ 
In the particular case where $(s,x)(t)$ is an $\omega$-periodic (non-trivial) solution it follows that 
$$ x(t) = (x+y)(t-\tau)e^{-\int_{t-\tau}^tD(r)dr} .$$
\end{lem}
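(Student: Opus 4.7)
The plan is to introduce the auxiliary function
$$F(t):=x(t)-(x+y)(t-\tau)\,e^{-\int_{t-\tau}^{t}D(r)dr}$$
defined for $t\geq\tau$, and to show that $F$ satisfies the linear ODE $F'(t)=-D(t)F(t)$. Integrating this equation from $\tau$ to $t$ gives $F(t)=F(\tau)\,e^{-\int_{\tau}^{t}D(r)dr}$, and a direct evaluation of $F(\tau)$ straight from its definition reproduces the right-hand side of the stated identity.

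The derivation of the ODE for $F$ comes from combining the equations governing $x$ and $y$. First, I would compute $y'(t)$ by applying the Leibniz rule to the definition of $y$ in (\ref{funciony}), obtaining
$$y'(t)=x(t)p(s(t))-x(t-\tau)p(s(t-\tau))\,e^{-\int_{t-\tau}^{t}D(r)dr}-D(t)y(t),$$
and then add this to the second equation of (\ref{Main System}) to get $(x+y)'(t)=x(t)p(s(t))-D(t)(x+y)(t)$. Next I would differentiate $F$, paying attention to the chain-rule contribution from the kernel $e^{-\int_{t-\tau}^{t}D(r)dr}$, whose $t$-derivative produces a factor $-(D(t)-D(t-\tau))$. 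After substituting the expressions for $x'(t)$ and $(x+y)'(t-\tau)$ and collecting, the terms involving $x(t-\tau)p(s(t-\tau))$ and $D(t-\tau)(x+y)(t-\tau)$ cancel, leaving exactly $F'(t)=-D(t)F(t)$, as anticipated.

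For the $\omega$-periodic case, observe that the formula just obtained can be rewritten as $F(t)=C\,e^{-\int_{0}^{t}D(r)dr}$ with $C:=F(\tau)\,e^{\int_{0}^{\tau}D(r)dr}$. When $(s,x)$ is an $\omega$-periodic solution, the functions $x$, $y$ (through its integral representation) and the kernel $e^{-\int_{t-\tau}^{t}D(r)dr}$ are all $\omega$-periodic, so $F$ itself is $\omega$-periodic. Matching $F(t+\omega)=F(t)$ against the explicit formula forces
$$C\,e^{-\int_{0}^{t}D(r)dr}\bigl(e^{-\int_{t}^{t+\omega}D(r)dr}-1\bigr)=0,$$
and since $\int_{t}^{t+\omega}D(r)dr$ is a strictly positive constant (by Hypothesis \ref{hip2} combined with periodicity of $D$), we conclude $C=0$, i.e.\ $F\equiv 0$, which is the second statement.

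The only mildly delicate step is the sign bookkeeping when differentiating $F$, specifically the contribution from differentiating $e^{-\int_{t-\tau}^{t}D(r)dr}$ with respect to both limits and matching it against the $D(t-\tau)(x+y)(t-\tau)$ term coming from $(x+y)'(t-\tau)$. Everything else is essentially algebraic cancellation together with the integration of the scalar linear ODE, so I do not expect any substantial obstacle.
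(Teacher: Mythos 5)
Your proposal is correct and takes essentially the same route as the paper: both derive the linear ODE $F'(t)=-D(t)F(t)$ for $F(t)=x(t)-(x+y)(t-\tau)e^{-\int_{t-\tau}^t D(r)\,dr}$ by first showing $(x+y)'(t)=x(t)p(s(t))-D(t)(x+y)(t)$ and then differentiating the product with the exponential kernel, and both settle the periodic case by combining periodicity of $F$ with the fact that $F(t)=Ce^{-\int_0^t D(r)\,dr}\to 0$. The only cosmetic difference is that you obtain the combined ODE via $y'(t)$ and the equation for $x'(t)$, whereas the paper writes it directly for $(x+y)(t-\tau)$; these are the same computation.
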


\begin{proof}
 Firstly, note that for a given solution $(s,x)(t)$ with not null initial condition  it follows that 
$$\frac{d}{dt}(x+y)(t-\tau)=-D(t-\tau)(x+y)(t-\tau)+x(t-\tau)p(s(t-\tau))$$
and consequently
\begin{align*}
    \frac{d}{dt}\left((x+y)(t-\tau)e^{-\int_{t-\tau}^tD(r)\,dr}\right)&= x(t-\tau)p(s(t-\tau)) e^{-\int_{t-\tau}^tD(r)\,dr}\\
    &\quad - D(t)(x+y)(t-\tau) e^{-\int_{t-\tau}^tD(r)\,dr}.
\end{align*}
Whence we deduce that
\begin{align*}
  &\frac{d}{dt}\left(x(t) -(x+y)(t-\tau)e^{-\int_{t-\tau}^tD(r)\,dr}  \right)=\\
  &\hspace{5.5cm} -D(t)\left(x(t) -(x+y)(t-\tau)e^{-\int_{t-\tau}^t D(r)\, dr} \right)
\end{align*}
 which provides the first part of the lemma.
 
Note that 
\begin{equation}\label{eq: 5.3}
\left| x(t) -(x+y)(t-\tau) e^{-\int_{t-\tau}^tD(r)\,dr} \right|\to 0 
\end{equation}
as $t$ tends to infinity. In particular, if $(s,x)(t)$ is a non-trivial periodic solution of (\ref{Main System}), the right side of (\ref{eq: 5.3}) is also an $\omega$-periodic function  and converges to zero as $t$ tends to infinity, therefore it is identically zero, and the proof is complete.
\end{proof}

\subsection{Proof of Theorem \ref{teoremadelaextincion}}

\begin{proof}

Assume that System (\ref{Main System}) is not persistent, given a solution $(s,x)(t)$ we will prove that $x(t)$ towards to zero as $t\to\infty$.    If $\lim_{t\to\infty}x(t)=0$ there is nothing to prove. Otherwise there exists $\varepsilon>0$ such that   
\begin{equation}\label{existeepsilon}
    \limsup_{t\to\infty}x(t)>\varepsilon
\end{equation} 
and we look for a contradiction. By using Lemma \ref{funcionz}, it follows that
$$|(z^*-s-x-y)(t)|\to 0, \quad t\to \infty.$$
Moreover, since the functions $z^*(t),\,s(t),\,x(t)$ and $y(t)$ are positive and 
$$|(z^*-s-x-y)(t)|<\varepsilon$$
from a certain time, by (\ref{existeepsilon}) we conclude that there exists  $t_0\geq 0$  such that $z^*(t_0)\geq s(t_0)$. Furthermore, $z^*(t)
- s(t)$ satisfy
\begin{align*}
  \frac{d}{dt}(z^*-s)(t)&=-D(t)(z^*-s)(t)+x(t)p(s(t))\geq -D(t)(z^*-s)(t)
\end{align*}
thus, from the above differential inequality, we see that $z^*(t)\geq s(t)$ for $t\geq t_0$. 
Note that, by assumption that System (\ref{Main System}) is not persistent, one of the following identities holds
\begin{equation}\label{desigualdad}
  \langle D\rangle >\langle p(z^*)\varphi\rangle,
\end{equation}
or
\begin{equation}\label{igualdad}
  \langle D\rangle =\langle p(z^*) \varphi\rangle.
\end{equation}
The rest of the proof fall naturally into two cases.

\emph{Case 1.}  Assume that (\ref{desigualdad}) holds. Consider again the functions $f(t)=p(z^*(t))$ and $g(t)=p(s(t))$, then by (\ref{eq: 3.7}) and Lemma \ref{vuelta} we obtain for $t\geq t_0$ 
\begin{align*}
x(t)&=x(t_0)e^{\int_{t_0}^t\left(p(s(r-\tau))\psi(r-\tau)-D(r)\right)\, dr}\\
&=x(t_0)e^{\int_{t_0}^t\left(p(s(r-\tau))\psi(r-\tau)-p(z^*(r-\tau)\varphi(r-\tau)\right)\, dr +\int_{t_0}^t\left(p(z^*(r-\tau))\varphi(r-\tau)-D(r)\right)\, dr}\\
  &\leq x(t_0) e^{\tau M}e^{- \int_{t_0}^t\left(D(r)-p(z^*(r-\tau))\varphi(r-\tau)\right)\, dr  }.
\end{align*}
From (\ref{desigualdad}) it may be conclude that $x(t)\to 0$ as $t\to \infty$ and we get a contradiction.

\emph{Case 2.} Assume now that (\ref{igualdad}) holds. We claim that   
\begin{equation}\label{alinfinitoymasalla}
  \int_{t_0}^t\left(p(z^*(r-\tau))\varphi(r-\tau)-p(s(r-\tau))\psi(r-\tau)\right)\,dr\to\infty
\end{equation}
as $t\to\infty$. Indeed, otherwise we could find $K>0$ such that: for all $t>t_0$  there exists $t_1>t$ such that 
$$\int_{t_0-\tau}^{t_1-\tau}\left(p(z^*(r))\varphi(r)-p(s(r))\psi(r)\right)\,dh<K.$$
Now combining the above  inequality together with the Lemma \ref{vuelta} we have that for all $t>t_0$ it follows 
\begin{align*}
\int_{t_0-\tau}^{t-\tau}\left(p(z^*(r))\varphi(r)-p(s(r))\psi(r)\right)\,dr&=\int_{t_0-\tau}^{t_1-\tau}\left(p(z^*(r))\varphi(r)-p(s(r))\psi(r)\right)\,dr\\
  & -\int_{t-\tau}^{t_1-\tau} \left(p(z^*(r))\varphi(r)-p(s(r))\psi(r)\right)\,dr\\
  &\leq K+\tau M.
\end{align*}
 Consequently,  from (\ref{eq: 3.7}) we obtain for all $t\geq t_0$ 
 \begin{align*}
\ln(x(t))&=\ln(x(t_0))+\int_{t_0}^{t}\left(p(s(r-\tau))\psi(r-\tau)-D(r)\right)\, dr\\
   &=\ln(x(t_0))+\int_{t_0}^{t}\left(p(s(r-\tau))p(\psi(r-\tau)-p(z^*(r-\tau))\varphi(r-\tau)\right)\, dr\\
   &\quad +\int_{t_0}^{t}\left(p(z^*(r-\tau))\varphi(r-\tau)-D(r)\right)\, dr\\
   &\geq \ln(x(t_0))-K-\tau M+\min_{l\in[0,\omega]} \int_{ 0}^{l}\left(p(z^*(r-\tau))\varphi(r-\tau)-D(r)\right)\, dr,
 \end{align*}
which implies that $\ln(x(t))$ is bounded from below for all $t\geq t_0$, i.e., the solution is persistent. This contradicts our assumption   and  \eqref{alinfinitoymasalla} is proved.

Now, by using again (\ref{eq: 3.7}) for $t\geq t_0$, we obtain  
\begin{align*}
x(t)&=x(t_0)e^{\int_{t_0}^tp(s(r-\tau))\psi(r-\tau)-D(r)\, dr}\\
 &=x(t_0)e^{\int_{t_0}^tp(s(r-\tau))\psi(r-\tau)-p(z^*(r-\tau)\varphi(r)\, dr +\int_{t_0}^tp(z^*(r-\tau))\varphi(r-\tau)-D(r)\, dr}.
\end{align*}
Finally, notice that (\ref{igualdad}) implies that for all $t\geq t_0$ we have
$$ \int_{t_0}^tp(z^*(r-\tau))\varphi(r-\tau)-D(r)\, dr \leq \max_{l\in[0,\omega]} \int_{0}^{l}p(z^*(r-\tau))\varphi(r-\tau)-D(r)\, dr, $$ consequently we obtain
\begin{align*}
x(t)  &\leq x(t_0)M e^{-\int_{t_0}^tp(z^*(r-\tau))\varphi(r-\tau)-p(s(r-\tau)\psi(r-\tau)\, dr  },
\end{align*}
where $M:=\max_{l\in[0,\omega]}\left\{e^{ \int_{0}^lp(z^*(r-\tau))\varphi(r-\tau)-D(r)\, dr}\right\}$. From (\ref{alinfinitoymasalla}) we conclude that $x(t)\to 0$ as $t\to \infty$, which contradicts (\ref{existeepsilon}) and the proof is complete.
\end{proof}

\subsection{Proof of Theorem \ref{teoremadeexistencia}}

\begin{proof}
The proof will be divided into two steps, we recall  that $ \mathcal{C}:=C([-\tau,0]\to\R^2)$.

\emph{Step 1.} Let $\delta>0$ the value given by Theorem \ref{persistenciauniforme} and consider 
$$T^{in}=T^{in}(R,\alpha)\quad \textnormal{with }  R=3\, \overline s\quad \text{ and }\alpha= e^{-\overline D\tau}\delta/2.$$
We now claim  that any solution of System  
(\ref{Main System}) with initial condition 
$$\|(s^{in},x^{in})\|\leq 3\,  \overline s $$
satisfies 
\begin{equation}\label{cotaR0}
  \|(s ,x )_t\|\leq R_0 \textnormal{ for all }t\geq 0, 
\end{equation}
where
$$R_0= z^*(0)+6\, \overline s +3\, \overline s p\left(3\,\overline s \right)\tau +\overline s.$$
To show this, first note that, by using the arguments to obtain  (\ref{cotay})  in the proof of Theorem \ref{persistenciauniforme}, it follows that
$$|y(0)|\leq     \|(s^{in},x^{in})\|p(\|(s^{in},x^{in})\|)\tau\leq 3\, \overline s p(3\, \overline s )\tau.$$
By definition of $R_0$ and applying the Lemma \ref{funcionz} again, we obtain that
\begin{equation}\label{cotas+x}
\begin{aligned}
  |(s,x)(t)|&\leq (s+x)(t)\\
  &\leq (s+x+y)(t)\\ 
  &\leq |(z^*-s-x-y)(t)|+|z^*(t)|\\
  &\leq |(z^*-s-x-y)(0)|e^{-\int_0^tD}+|z^*(t)|\\
  &\leq \max\{z^*(0),(s+x+y)(0)\}+\overline s\\
  &\leq  z^*(0)+6\, \overline s +3\, \overline s p\left(3\, \overline s \right)\tau +\overline s\\
  &=R_0 
\end{aligned} 
\end{equation}
for all   $t\geq -\tau$.

\emph{Step 2.} Let us define 
\begin{align*}
  %H&=\max\left\{|f(t,\phi)|:(t,\phi)\in \R^+_0\times C,\, \|\phi\|\leq R_0\right\},\\
  S&=\left\{\phi\in \mathcal{C}: |\phi(h)-\phi(r)|\leq \sqrt{2} R_0\left(\overline D+p(R_0)\right)|h-r|\text{ for all }h,\,r\in [-\tau,0]\right\}\\
  S_0&=\left\{\phi\in S:\|\phi\|\leq 2\, \overline s,\,|\phi_2(0)|\geq \delta \right\},\\
  S_1&=\left\{\phi\in S:\|\phi\|< 3\, \overline s,\,|\phi_2(0)|> \delta/2 \right\},\\
  S_2&=\left\{\phi\in S:\|\phi\|\leq R_0,\,|\phi_2(0)|\geq  e^{-\overline DT^{in}}\delta/2 \right\}.\\
\end{align*}
Note that $S_0\subset S_1\subset S_2$ are convex subset of $\mathcal{C}$,   $S_0$ and $S_2$ are compacts as consequence of Arzelà–Ascoli Theorem  and  
$$S_1=S_2\cap \left\{\phi\in \mathcal{C}:\|\phi \|<3\, \overline s ,\,|\phi_2(0)|>\delta/2\right\} $$
consequently, $S_1$ is open relative to $S_2$.

Next we define the Poincar\'e operator given by
$$\begin{array} {rccl}
  P:&S_2&\to& \quad \mathcal{C}  \\
  &\phi&\mapsto &(s,x) (\cdot+\omega,\phi).
\end{array}$$
where $(s,x)(t,\phi)$ is the solution of System (\ref{Main System}) at time $t$, with initial condition  $\phi$ (recall (\ref{notacioncondicionesiniciales})). Notice that for a given $k\in \N\cup\{0\}$,  $P^k(\phi)$ is the solution $(s,x)(t,\phi)$ over the time interval $[k\omega-\tau,k\omega]$ (including when $k\omega-\tau<0$).

To use Horn's  fixed point Theorem we need to prove that
$P^k(S_1)\subseteq S_2$ for all $k\in \N$. Let $\phi\in S_1$ and $(s,x)$ the solution corresponding to the initial condition $\phi$, it suffices to show that $(s,x)_t\in S_2$ for $t\geq 0$. From Equation (\ref{cotaR0}) we have $|(s,x)_t|\leq R_0$ while, for $t\leq T^{in}$  inequality (\ref{eq: 4.12}) implies %and, as before, obtain  
%$$\frac{d}{dt}x(t)\geq -\overline Dx(t)$$
%therefore, if $t\leq T^{in}$, it follows that 
$$|x(t)|\geq e^{-\overline Dt}|x(0)|\geq e^{-\overline DT^{in}}\delta/2.$$
Hence
\begin{equation}\label{cotaentau}
  x(\tau)\geq e^{-\overline D\tau}\delta/2
\end{equation}
thus, by using Theorem \ref{persistenciauniforme}, if $t\geq T^{in}$, then
$$|x(t)|\geq \delta\geq e^{-\overline DT^{in}}\delta/2.$$ 
Now, it turns out that
\begin{align*}
  |(s,x)(h)-(s,x)(r)|&=\sqrt{(s(h)-s(r))^2+(x(h)-x(r))^2}\\
  &=\sqrt{s^{'2}(\xi_s)(h-r)^2+x^{'2}(\xi_x)(h-r)^2}\\
  &=|h-r|\left(\left(D(\xi_s)(\si(\xi_s)-s(\xi_s))-x(\xi_s)p(s(\xi_s))\right)^2\right.\\
  &\quad \left.+\left(-D(\xi_x)x(\xi_x)+x(\xi_x-\tau)p(s(\xi_x-\tau))e^{-\int^{\xi_x}_{\xi_x-\tau}D(r)\, dr}\right)^2 \right)^{1/2}\\
  &\leq |h-r|\left(\left(\overline DR_0+R_0p(R_0)\right)^2+\left(\overline DR_0+R_0p(R_0)\right)^2 \right)^{1/2}\\
  &=|h-r|\sqrt{2} R_0\left(\overline D+p(R_0)\right)
\end{align*}
for all $h,\,r\geq 0$ and $\xi_s$, $\xi_x$  between  $h$ and $r$.
Therefore 
$$(s,x)\Big|_{[k\omega-\tau,k\omega]}=P^k(\phi)\in S_2,\quad \textnormal{for all }k\in \mathbb{N}.$$

Next, we need to show that there exists  $m\in\N$ such that if $k\geq m$ then $P^k(\phi)\in S_0$.  Let $T\geq  T^{in} $ such that 
$$R_0e^{-\int_0^TD(r)\, dr}\leq \overline s.$$
If $t\geq T$ then, by using again Theorem \ref{persistenciauniforme} and considering (\ref{cotaentau}), it follows that $|x(t)|\geq \delta$.  Estimating $|(s,x)(t)|$ one more time as  in  (\ref{cotas+x}) we have 
\begin{equation*}
  |(s,x)(t)|   \leq R_0e^{-\int_0^tD(r)\, dr}+\overline s\leq 2\, \overline s.
\end{equation*}
As before, it follows that  
$$|(s,x)(h)-(s,x)(r)|\leq |h-r|\sqrt{2} R_0\left(\overline D+p(R_0)\right) $$
for all $h,\,r\geq 0$. Thus, if $k\omega \geq m\omega \geq T$ we obtain  
$$(s,x)\Big|_{[k\omega-\tau,k\omega]}=P^k(\phi)\in S_0.$$
Finally,  by Horn's  fixed point Theorem \cite[Theorem 6]{horn1970some}, there exist a fixed point of $P$ whose second coordinate is positive. 
\end{proof}

\subsection{Proof of Theorem \ref{teoremadeestabilidad}}
\begin{proof}
Consider a solution $(s_1,x_1)$ with no null initial conditions and  $(s_2,x_2)$ a periodic non-trivial solution, whose existence was proved in Theorem \ref{teoremadeexistencia}. 
The proof will be divided into three steps.

\emph{Step 1.} Note that Lemma \ref{funcionz}  yields to $z^*(t-\tau)=(s_2 +x_2+y_2)(t-\tau)$ and then
\begin{align*}
   x_2(t) +\frac{(s_2-z^*)(t-\tau)x_2(t)}{(x_2+y_2)(t-\tau)}=0.
\end{align*}
On another hand, and also using Lemma \ref{lemamuyinteresante}, we get that
\begin{align*}
    x_1(t)+\frac{(s_1-z^*)(t-\tau)x_2(t)}{(x_2+y_2)(t-\tau)}&= x_1(t)+ (s_1-z^*)(t-\tau)e^{-\int_{t-\tau}^t D(r)\,dr}\\
    &=x_1(t)-(x_1+y_1)(t-\tau)e^{-\int_{t-\tau}^t D(r)\,dr}\\
    &\quad + (s_1+x_1+y_1-z^*)(t-\tau)e^{-\int_{t-\tau}^t D(r)\,dr}\\
    &=\left(x_1(\tau)e^{\int_0^\tau D(r)\,dr}-(x_1+y_1)(0)\right)e^{-\int_{0}^tD(r)dr}\\
    &\quad +(s_1+x_1+y_1-z^*)(0)e^{-\int_{0}^t D(r)\,dr}\\
    &=-C_0 e^{-\int_0^t D(r)\,dr},
\end{align*}
 where 
\begin{align*}
    C_0 = (z^*-s_1)(0)-x(\tau)e^{\int_0^\tau D(r)\, dr } .
\end{align*}
 Then we obtain
\begin{equation}\label{diferenciabiomasas}
\begin{aligned}
    (x_1-x_2)(t)&=-(s_1-s_2)(t-\tau)\frac{x_2(t)}{(x_2+y_2)(t-\tau)}\\
    &\quad +\left(\frac{(s_1-z^*)(t-\tau)x_2(t)}{(x_2+y_2)(t-\tau)}+x_1(t)\right.\\
    &\hspace{4cm}\left.-\frac{(s_2-z^*)(t-\tau)x_2(t)}{(x_2+y_2)(t-\tau)}-x_2(t)\right)\\ 
    &=-(s_1-s_2)(t-\tau)\frac{x_2(t)}{(x_2+y_2)(t-\tau)}-C_0 e^{-\int_0^t D(r)\,dr}.
\end{aligned}
\end{equation}
Therefore,
\begin{align*}
  \frac{d}{dt}(s_1-s_2)(t) &=-  D(t)(s_1-s_2)(t)-x_1(t)p(s_1(t))+x_2(t)p(s_2(t)) \\
  &= -  D(t)(s_1-s_2)(t)-x_1(t)(p(s_1(t))-p(s_2(t)))\\
  &\quad -(x_1-x_2)(t)p(s_2(t)) \\
  &= -  D(t)(s_1-s_2)(t)-x_1(t)p'(\xi)(s_1-s_2)(t)\\
  &\quad -(x_1-x_2)(t)p(s_2(t)) \\
  %&= -  D(t)(s_1-s_2)(t)-x_1(t)p'(\xi)(s_1-s_2)(t)\\
  %&\quad -\left(x_1(t)-(x_2+y_2)(t-\tau)e^{-\int_{t-\tau}^tD(r)\,dr} \right)p(s_2(t)) \\
  &= -(s_1-s_2)(t)( D (t)+x_1(t)p'(\xi) )\\
  &\quad +(s_1-s_2)(t-\tau)\frac{x_2(t)p(s_2(t))}{(x_2+y_2)(t-\tau)} +C_0p(s_2(t))e^{-\int_0^tD(r)\,dr} 
\end{align*}
where $\xi=\xi(t)\in (s_1(t),s_2(t))$, but for simplicity we do not write the dependence on $t$.

\emph{Step 2.} Let us define 
$$m=\min_{t\geq \tau}\{x_1(t)p'(\xi(t))\}.$$
Since the system is persistent, $p'$ is positive, and $s_1$, $s_2$ are bounded by above, we deduce that $m>0$. 
Furthermore consider the function
$$J(\varepsilon)=-m+ \max_{h\in[0,\omega]}\left\{\frac{x_2(h)p(s_2(h ))}{(x_2+y_2)(h)}\right\}\left(e^{\tau\varepsilon}-1\right)+3 \varepsilon/2.$$
Note that $J(0)<0$, hence we can fix a positive $\varepsilon$ such that $\varepsilon< \langle D\rangle/2$  and  $J(\varepsilon)\leq 0$.  Also notice that there exists $t_0\geq \omega$ such that   
\begin{equation}\label{Definiciont0}
  C_0\max_{h\in[0,\omega]}\left\{ p(s_2(h ))\right\}e^{- \int_0^{t_0-\omega} D(r)\,dr}\leq \min_{h\in[0,\omega]}\left\{\frac{\varepsilon\,(x_2+y_2)(h)}{2}\right\}. 
\end{equation} 
With all the previous considerations, define
\begin{displaymath}
 w (t)=\max_{h\in[t_0-\tau,t_0]}\left\{\frac{(s_1-s_2)(h)}{(x_2+y_2)(h)},1 \right\}(x_2+y_2)(t)e^{-(t-t_0)\varepsilon},\quad t\geq t_0-\tau,   
\end{displaymath}
and
\begin{displaymath}
  \mathcal{S}=\left\{t\geq t_0:(s_1-s_2)(h)\leq w (h)\text{ for all }h\in[t_0,t]\right\}.   
\end{displaymath}
Observe that $\mathcal{S}$ is a non-empty set  since %, indeed  for $t_0$ we have \textcolor{red}{ $$(s_1-s_2)(t_0)=(s_1-s_2)(t_0)\frac{(x_2+y_2)(t_0)}{(x_2+y_2)(t_0)}\leq w(t_0), $$
%it follows that 
$t_0\in\mathcal{S}$. %}  
We claim that $T^*=\sup\mathcal{S} $ is infinite. On the contrary, suppose that $T^*$ is finite. Consequently, by the mean value Theorem there is $t^*\in[T^*,T^*+\tau)$ such that 
\begin{equation}\label{absurdo}
  \frac{d}{dt}(s_1-s_2)(t^*)\geq \frac{d}{dt}w (t^*),\quad  (s_1-s_2)(t^*)>w (t^*)>0.
\end{equation}
Observe that 
\begin{align*}
    w(t-\tau)&=%\max_{h\in[t_0-\tau,t_0]}\left\{\frac{(s_1-s_2)(h)}{(x_2+y_2)(h)},1 \right\}(x_2+y_2)(t-\tau)e^{-(t-\tau-t_0)\varepsilon}\\
     \max_{h\in[t_0-\tau,t_0]}\left\{\frac{(s_1-s_2)(h)}{(x_2+y_2)(h)},1 \right\}(x_2+y_2)(t-\tau)\frac{(x_2+y_2)(t)}{(x_2+y_2)(t)}e^{-(t-t_0)\varepsilon}e^{\tau\varepsilon}\\
    &=w (t)\frac{(x_2+y_2)(t-\tau)}{(x_2+y_2)(t)}e^{\tau\varepsilon},
\end{align*} 
and that
\begin{align*}
    \frac{d}{dt}w (t)& = \max_{h\in[t_0-\tau,t_0]}\left\{\frac{(s_1-s_2)(h)}{(x_2+y_2)(h)},1 \right\} \\
    &\quad e^{-(t-t_0)\varepsilon} \left(-D(t)(x_2+y_2)(t)-x_2(t)p(s_2(t))-\varepsilon(x_2+y_2)(t) \right)\\
    &=w(t)\left(-D(t)-\frac{x_2(t)}{(x_2+y_2)(t)}p(s_2(t))-\varepsilon\right).
    %&=w(t)\left(-D(t)-e^{-\int_p(s_2(t))-\varepsilon\right).
\end{align*}
Furthermore, using \eqref{Definiciont0}, for $t\geq t_0$,  we have that
\begin{align*}
   C_0p(s_2(t))e^{-\int_0^tD(r)\, dr}&\leq  \min_{h\in[0,\omega]}\left\{\frac{\varepsilon\,(x_2+y_2)(h)}{2}\right\}e^{-\int_{t_0-\omega}^tD(r)\, dr}\\
   &\leq  \frac{\varepsilon}{2}(x_2+y_2)(t) e^{-(t-t_0)\varepsilon}\\
   &\leq \frac{\varepsilon}{2}w(t),
\end{align*}
obtained using the inequality
$$e^{(t-t_0)\varepsilon-\int_{t_0-\omega}^tD(r)\, dr}\leq 1,$$
which holds since $\varepsilon\leq \langle D\rangle /2$.
Now, taking the derivative of $s_1-s_2$ from the previous step and using the above estimations we have   
\begin{align*}
  \frac{d}{dt}(s_1-s_2)(t^*)&\leq -(s_1-s_2)(t^*)( D(t^*) +m)+(s_1-s_2)(t^*-\tau) \frac{x_2(t^*)p(s_2)(t^*) }{(x_2+y_2)(t^*-\tau)}\\
  &\quad +C_0p(s_2(t^*))e^{-\int_0^{t^*}D(r)\, dr}\\
  &<-w (t^*)(D(t^*)+m) +w (t^*-\tau) \frac{x_2(t^*)p(s_2)(t^*) }{(x_2+y_2)(t^*-\tau)}+ \frac{\varepsilon}{2} w(t^*) \\
  &=w(t^*)\left(- D(t^*) -m+ \frac{x_2(t^*)p(s_2)(t^*) }{(x_2+y_2)(t^*)}e^{\tau\varepsilon}+ \frac{\varepsilon}{2} \right) \\
 % &=w(t^*)\left(- D(t^*) + \frac{x_2(t^*)p(s_2)(t^*) }{(x_2+y_2)(t^*)}-m\right.\\
  %&\hspace{1cm}\left.+ \frac{x_2(t^*)p(s_2)(t^*) }{(x_2+y_2)(t^*)}(e^{\tau\varepsilon}-1)+ \frac{\varepsilon}{2} \right).
%\end{align*} 
%From the definition of $\varepsilon$ it follows that 
%\begin{align*}
  %\frac{d}{dt}(s_1-s_2)(t^*)
  &< w(t^*)\left(- D(t^*) + \frac{x_2(t^*)p(s_2)(t^*) }{(x_2+y_2)(t^*)}-\varepsilon \right) \\
  &=\frac{d}{dt}w(t^*)
\end{align*}
where we use the definition of $\varepsilon$. This contradicts (\ref{absurdo}) and, therefore,  $\sup \mathcal{S}=\infty$.

\emph{Step 3.} Following a similar reasoning, this time for $s_2-s_1$, we obtain
\begin{align*}
    \frac{d}{dt}(s_2-s_1)(t)&\leq - (s_2-s_1)(t)( D(t) +x_1(t)p'(\xi) )\\
    &\quad +(s_2-s_1)(t-\tau) \frac{x_2(t)p(s_2(t))}{(x_2+y_2)(t-\tau)}+\left(-C_0\right)p(s_2(t))e^{-\int_0^tD(r)\, dr}
\end{align*}
and we reach the desired result due to
$$|(s_1-s_2)(t)|\leq  \max_{h\in[t_0-\tau,t_0]}\left\{\frac{(s_1-s_2)(h)}{(x_2+y_2)(h)},1 \right\}(x_2+y_2)(t)e^{-(t-t_0)\varepsilon}$$
which tends to zero as $t$ tends to infinity (note that we do not know the sign of $C_0$).

It remains to prove that $x_1(t)$ tends to $x_2(t)$ when $t$ goes to infinity.
By Equation (\ref{diferenciabiomasas}) we obtain
\begin{align*}
    |(x_1-x_2)(t)|&=\left|(s_1-s_2)(t-\tau) e^{-\int_{t-\tau}^t D(r)\, dr}+C_0e^{-\int_0^t D(r)\, dr}\right|\\
    &\leq \left|(s_1-s_2)(t-\tau) \right|+|C_0|e^{-\int_0^t D(r)\, dr}
\end{align*}
and we conclude that $|(x_1-x_2)(t)|$ tends to zero as $t$ tends to infinity  with exponential decay.
% \textcolor{red}{ In turn, from (\ref{eq: 5.11}),  by using Lemma \ref{funcionz} and the previous inequality we deduce that  
% $|(x_1+y_1-x_2-y_2)(t-\tau)|\to 0$
%  as $t$ tends to infinity, with exponential decay. Next, observe that a consequence of Lemma \ref{lemamuyinteresante} is that
% $$\left|(x_1-x_2)(t)-(x_1+y_1-x_2-y_2)(t-\tau)e^{-\int_{t-\tau}^tD(r)\, dr}\right|\to 0$$
% when $t$ tends to infinity. Therefore, we conclude that $|(x_1-x_2)(t)|$ tends to zero as $t$ tends to infinity, with exponential decay.}

Finally, for the uniqueness of the periodic solution, it suffices to note that the distance between two periodic solutions can only tend to zero if it is zero at all times.

\end{proof}

\section*{Acknowledgments}  %    This work was partially supported by  Conicet  under grant PIP 11220200100175CO and by project TOMENADE [MATH-AmSud, 21-MATH-08].
This research is partially supported by  
PROGRAMA REGIONAL MATH-AMSUD MATH2020006. The first author is supported by CONICET  under grant PIP 11220200100175CO. 
The second author is supported  by FONDECYT 11190457.

\addcontentsline{toc}{chapter}{Bibliograf\'ia}
\bibliography{biblio}

\begin{thebibliography}{10}

\bibitem{amster2021persistence}
Pablo Amster and Melanie Bondorevsky.
\newblock Persistence and periodic solutions in systems of delay differential
  equations.
\newblock {\em Applied Mathematics and Computation}, 403:126193, 2021.

\bibitem{amster2020dynamics}
Pablo Amster, Gonzalo Robledo, and Daniel Sep{\'u}lveda.
\newblock Dynamics of a chemostat with periodic nutrient supply and delay in
  the growth.
\newblock {\em Nonlinearity}, 33(11):5839, 2020.

\bibitem{amster2020existence}
Pablo Amster, Gonzalo Robledo, and Daniel Sep{\'u}lveda.
\newblock Existence of $\omega$-periodic solutions for a delayed chemostat with
  periodic inputs.
\newblock {\em Nonlinear Analysis: Real World Applications}, 55:103134, 2020.

\bibitem{B-F-W86}
Geoffrey Butler, Herb~I Freedman, and Paul Waltman.
\newblock Uniformly persistent systems.
\newblock {\em Proceedings of the American Mathematical Society}, pages
  425--430, 1986.

\bibitem{caperon1969time}
John Caperon.
\newblock Time lag in population growth response of isochrysis galbana to a
  variable nitrate environment.
\newblock {\em Ecology}, 50(2):188--192, 1969.

\bibitem{caraballo2015nonautonomous}
Tom{\'a}s Caraballo, Xiaoying Han, and Peter~E Kloeden.
\newblock Nonautonomous chemostats with variable delays.
\newblock {\em SIAM Journal on Mathematical Analysis}, 47(3):2178--2199, 2015.

\bibitem{choi2017cucker}
Young~Pil Choi and Jan Haskovec.
\newblock Cucker-smale model with normalized communication weights and time
  delay.
\newblock {\em Kinetic and Related Models}, 10(4):1011--1033, 2017.

\bibitem{ellermeyer1994competition}
Sean~F Ellermeyer.
\newblock Competition in the chemostat: global asymptotic behavior of a model
  with delayed response in growth.
\newblock {\em SIAM Journal on Applied Mathematics}, 54(2):456--465, 1994.

\bibitem{ellermeyer2003theoretical}
Sean~F Ellermeyer, Jerald Hendrix, and Nariman Ghoochan.
\newblock A theoretical and empirical investigation of delayed growth response
  in the continuous culture of bacteria.
\newblock {\em Journal of theoretical biology}, 222(4):485--494, 2003.

\bibitem{ellermeyer2001persistence}
Sean~F Ellermeyer, Sergei~S Pilyugin, and Ray Redheffer.
\newblock Persistence criteria for a chemostat with variable nutrient input.
\newblock {\em Journal of Differential Equations}, 171(1):132--147, 2001.

\bibitem{freedman1989chemostat}
Herbert~I Freedman, Joseph~WH So, and Paul Waltman.
\newblock Chemostat competition with time delays.
\newblock {\em IMACS Ann. Comput. and Appl. Math}, 5(1):4, 1989.

\bibitem{F-W77}
Herbert~I Freedman and Paul Waltman.
\newblock Mathematical analysis of some three-species food-chain models.
\newblock {\em Mathematical Biosciences}, 33(3-4):257--276, 1977.

\bibitem{horn1970some}
WA~Horn.
\newblock Some fixed point theorems for compact maps and flows in banach
  spaces.
\newblock {\em Transactions of the American Mathematical Society},
  149(2):391--404, 1970.

\bibitem{cartabia2022persistence}
Mauro Rodriguez~Cartabia.
\newblock Persistence criteria for a chemostat with variable nutrient input and
  variable washout with delayed response in growth.
\newblock {\em arXiv preprint arXiv:2204.09735}, 2022.

\bibitem{smith1995theory}
Hal~L Smith and Paul Waltman.
\newblock {\em The theory of the chemostat: dynamics of microbial competition},
  volume~13.
\newblock Cambridge university press, 1995.

\bibitem{teng2011periodic}
Zhidong Teng, Linfei Nie, and Xining Fang.
\newblock The periodic solutions for general periodic impulsive population
  systems of functional differential equations and its applications.
\newblock {\em Computers \& Mathematics with Applications}, 61(9):2690--2703,
  2011.

\end{thebibliography}
\bibliographystyle{plain}
\end{document}